\newtheorem{theorem}{Theorem}[section]
\newtheorem{lemma}[theorem]{Lemma}
\newtheorem{proposition}[theorem]{Proposition}
\newtheorem{remark}[theorem]{Remark}
\renewcommand{\pmod}[1]{{\ifmmode\text{\rm\ (mod~$#1$)}\else\discretionary{}{}{\hbox{ }}\rm(mod~$#1$)\fi}}
\begin{document}

\title[Modular Forms]{Ternary Quadratic Forms And Half-Integral Weight Modular Forms}
\author{Alia Hamieh}
\address{Department of Mathematics \\ University of British Columbia \\ Room
121, 1984 Mathematics Road\\ Vancouver, British Columbia \\ Canada V6T 1Z2}
\email{ahamieh@math.ubc.ca}
\subjclass[2000]{Primary 11F37; secondary 11F67, 11E20}
\thanks{}

\begin{abstract}
Let $k$ be a positive integer such that $k\equiv3\mod4$, and let $N$ be a positive square-free integer. In this paper, we compute a basis for the two-dimensional subspace $S_{\frac{k}{2}}(\Gamma_{0}(4N),F)$ of half-integral weight modular forms associated, via the Shimura correspondence, to a newform $F\in S_{k-1}(\Gamma_{0}(N))$, which satisfies $L(F,\frac{1}{2})\neq0$. This is accomplished by using a result of Waldspurger, which allows one to produce a basis for the forms that correspond to a given $F$ via local considerations, once a form in the Kohnen space has been determined.\end{abstract}
\maketitle
\thispagestyle{empty}
\setcounter{section}{-1}

\section{Introduction}
Let $k$ be an odd positive integer, and $M$ be a positive integer divisible by $4$. A modular form of half-integral weight $\frac{k}{2}$ for $\Gamma_{0}(M)$ is a holomorphic function $f$ on the upper half-plane which is also holomorphic at the cusps and transforms like the $k$-th power of the theta series $$\displaystyle{\theta(z)=1+2\sum_{n\geq1}e^{2\pi in^{2}z}}$$ under fractional linear transformations of $\Gamma_{0}(M)$. If $f$ vanishes at all the cusps, we say that it is a cusp form and write $f\in S'_{\frac{k}{2}}(\Gamma_{0}(M))$. We also denote by $S'_{\frac{k}{2}}(M,\boldsymbol{\chi})$, the space of cusp forms with central character $\boldsymbol{\chi}$ of conductor dividing $M$. The Kohnen subspace in $S'_{\frac{k}{2}}(M,\boldsymbol{\chi})$ consists of $\displaystyle{f(z)=\sum_{n\geq1}a_{n}e^{2\pi inz}}$ with the Fourier coefficients $a_{n}$ satisfying $$a_{n}=0,\text{ when } \boldsymbol{\chi}_{(2)}(-1)(-1)^{\frac{k-1}{2}}n\equiv2,3 \mod{4},$$ where $\boldsymbol{\chi}_{(2)}$ is the $2$-primary component of $\boldsymbol{\chi}$.

One can define an action of Hecke operators on the space of half-integral weight modular forms. The different Hecke operators commute $(T_{m^{2}}T_{n^{2}}=T_{m^{2}n^{2}})$, and the operator $T_{p^{2v}}$ is a polynomial in $T_{p^{2}}$. However, one can have non-trivial operators $T_{m}$ only for square $m$ or for $(m,M)\ne1$. Hence, for a half-integral weight modular form which is an eigen-form for all Hecke operators, one can only relate its coefficients whose indices differ by a perfect square $($\cite{koblitz}, Proposition $14$$)$. For a more detailed exposition on half-integral weight modular forms the reader is referred to \cite{koblitz}. 

In 1973, Shimura proved a fundamental result which gives a correspondence between modular forms of half-integral weight and modular forms of even integral weight. Let $S_{\frac{3}{2}}(M,\boldsymbol{\chi})$ denote the orthogonal complement $($with respect to the Petersson inner product$)$ of the subspace of $S'_{\frac{3}{2}}(M,\boldsymbol{\chi})$ spanned by the Shimura theta series $$\theta_{\psi,m}(z)=\sum_{n=-\infty}^{\infty}\psi(n)ne^{2\pi in^{2}mz}$$ for all positive integers $m$ and odd primitive Dirichlet characters $\psi$ $($see \cite{sturm}$)$. For an odd integer $k\geq5$, we put for notational convenience $S_{\frac{k}{2}}(M,\boldsymbol{\chi})=S'_{\frac{k}{2}}(M,\boldsymbol{\chi})$. Adapting the notation from \cite{waldspurger}, we let $$S_{k-1}^{\mathrm{new}}(\boldsymbol{\chi}^{2})=\bigcup_{N>0}S_{k-1}^{\mathrm{new}}(N,\boldsymbol{\chi}^{2}),$$ where $S_{k-1}^{\mathrm{new}}(N,\boldsymbol{\chi}^{2})$ is the (finite) subset of newforms in $S_{k-1}(N,\boldsymbol{\chi}^{2})$. If $F\in S_{k-1}^{\mathrm{new}}(\boldsymbol{\chi}^{2})$ is such that $T_{p}F=b_{p}F$ for all $p$, one defines the Shimura lift of $F$ to be the subspace $$S_{\frac{k}{2}}(M,\boldsymbol{\chi},F)=\{f\in S_{\frac{k}{2}}(M,\boldsymbol{\chi}):T_{p^2}f=b_{p}f \text{ }\text{ } \text{for almost all } p\not | M\}.$$ 
Shimura showed that if $f\in S_{\frac{k}{2}}(M,\boldsymbol{\chi})$ is an eigenform for almost all Hecke operators, then there exists a unique $F\in S_{k-1}^{\mathrm{new}}(\boldsymbol{\chi}^{2})$ such that $f\in S_{\frac{k}{2}}(M,\boldsymbol{\chi},F)$. This assignment is at the heart of the Shimura correspondence. The following is a simplified version of Shimura's original theorem. 
\begin{theorem}\label{thm:shimura2}
$($See \cite{shimura}$)$ Let $f \in S_{\frac{k}{2}}(M,\boldsymbol{\chi})$ be a common eigenfunction for all $T_{p^2}$ with $\lambda_{p}$ being the corresponding eigenvalue. Define the sequence of complex numbers $\{b_{n}\}$ by the formal identity $$\sum_{n=1}^{\infty}b_{n}n^{-s} = \prod_{p}\frac{1}{1-\lambda_{p}p^{-s}+\boldsymbol{\chi}(p^2)p^{k-2-2s}}$$
Then $F(z)=\sum_{n=1}^{\infty}b_{n}e^{2\pi inz}$  belongs to $ S_{k-1}(N',\boldsymbol{\chi}^2)$ for some integer $N'$ which is divisible by the conductor of $\chi^{2}$. 
\end{theorem}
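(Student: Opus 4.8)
The plan is to recover the modularity of $F$ from the analytic behaviour of the Dirichlet series $\sum_{n}b_{n}n^{-s}$ together with its twists by Dirichlet characters, and to feed these into a converse theorem of Weil type. Thus the proof splits into two essentially independent parts: an \emph{algebraic} part, in which the Hecke-eigenvalue hypothesis on $f$ is converted into an Euler-product identity linking the $b_{n}$ to the Fourier coefficients of $f$, and an \emph{analytic} part, in which a Rankin--Selberg integral supplies the continuation, functional equations, and growth estimates that the converse theorem requires.

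For the algebraic part, write $f(z)=\sum_{n\geq1}a_{n}e^{2\pi inz}$ and fix a square-free integer $t$ with $a_{t}\neq0$. The relations $T_{p^{2}}f=\lambda_{p}f$ translate into a three-term recursion among the coefficients $a_{tp^{2v}}$, from which I would compute the local Euler factor of $\sum_{n\geq1}a_{tn^{2}}n^{-s}$ at each prime $p$; separating the square-index sequence out of $f$ is exactly what collapses the quadratic recursion to a linear one. Multiplying these local factors out yields the identity
\begin{equation*}
\sum_{n\geq1}\frac{a_{tn^{2}}}{n^{s}}=a_{t}\,\frac{1}{L\!\left(s-\tfrac{k-3}{2},\,\boldsymbol{\chi}\chi_{t}\right)}\sum_{n\geq1}\frac{b_{n}}{n^{s}},
\end{equation*}
where $\chi_{t}$ is the quadratic character attached to $(-1)^{(k-1)/2}t$ and $\sum_{n}b_{n}n^{-s}$ is precisely the Euler product in the statement. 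In particular this shows the $b_{n}$ are determined by the $a_{n}$, and it reduces the analytic study of $\sum_{n}b_{n}n^{-s}$ to that of the left-hand series, the Dirichlet $L$-factor being already well understood.

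For the analytic part, I would obtain the continuation and functional equation of $\sum_{n}a_{tn^{2}}n^{-s}$ (and of its character twists) by a Rankin--Selberg unfolding. The idea is to multiply $f$ by a suitable theta series so that the product has \emph{integral} weight, and to pair it against a real-analytic Eisenstein series $E(z,s)$ of half-integral weight for a congruence group; unfolding $E$ isolates the coefficients supported on squares and produces an integral representation of $\sum_{n}a_{tn^{2}}n^{-s}$ up to an explicit gamma factor. The meromorphic continuation and the functional equation of $E(z,s)$ then pass directly to the Dirichlet series, and twisting $f$ or the theta series by a primitive character $\psi$ yields the corresponding statements for $\sum_{n}b_{n}\psi(n)n^{-s}$.

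Finally, I would invoke Weil's converse theorem: once the twisted series $\sum_{n}b_{n}\psi(n)n^{-s}$ are shown to extend holomorphically, to be bounded in vertical strips, and to satisfy functional equations of the shape $s\mapsto(k-1)-s$ with the correct $\varepsilon$-factors for sufficiently many $\psi$, it follows that $F(z)=\sum_{n}b_{n}e^{2\pi inz}$ is a cusp form in $S_{k-1}(N',\boldsymbol{\chi}^{2})$, and tracking the conductors through the functional equation shows that $N'$ is divisible by the conductor of $\boldsymbol{\chi}^{2}$. I expect the main obstacle to be the analytic part: carrying out the Rankin--Selberg computation in the half-integral weight setting requires controlling the theta multiplier system and the precise Eisenstein series, and the bookkeeping of gamma factors, $\varepsilon$-factors, levels and characters must be exact in order both to verify the hypotheses of the converse theorem and to match the level $N'$ with the conductor of $\boldsymbol{\chi}^{2}$.
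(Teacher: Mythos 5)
This theorem is quoted by the paper directly from Shimura's 1973 article with no proof of its own, and your proposal is a faithful reconstruction of exactly the argument given there: the Hecke-eigenvalue recursion yielding the identity $\sum_{n} a_{tn^{2}}n^{-s} = a_{t}\,L\bigl(s-\tfrac{k-3}{2},\boldsymbol{\chi}\chi_{t}\bigr)^{-1}\sum_{n}b_{n}n^{-s}$, the Rankin--Selberg convolution with theta series against Eisenstein series to continue the twisted Dirichlet series, and Weil's converse theorem to produce $F\in S_{k-1}(N',\boldsymbol{\chi}^{2})$. So your approach coincides with the proof the paper relies on by citation; the only caveats are small bookkeeping points (the Eisenstein series in the unfolding carries the integral weight $\tfrac{k-1}{2}$, and for $k=3$ cuspidality of $F$ requires the exclusion of the theta series already built into the paper's definition of $S_{\frac{3}{2}}$).
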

Thus, each eigenform of weight $\frac{k}{2}$ is associated to a form of integral weight $k-1$. However, it is not clear  (and in general very far from true) that this correspondence is bijective on the level of eigenforms. Indeed, the principal point for us in this paper is that the number of eigenforms of weight $\frac{k}{2}$ of a given level and character associated to a fixed $F$ is a rather subtle invariant. To clarify the situation, we introduce the following setup. Let $F\in\displaystyle{S_{k-1}^{\text{new}}(\boldsymbol{\chi}^{2})}$ be a newform of even weight $k-1$, level $N$ and character $\boldsymbol{\chi}^{2}$ defined modulo $N$. Let $M$ be a positive integer divisible by $4$, and suppose that $\boldsymbol{\chi}$ is defined modulo $M$. (Here $N$ may or may not divide $M$.) Consider the space $S_{\frac{k}{2}}(M,\boldsymbol{\chi},F)$ defined in a previous paragraph. We are led to the following questions:
\begin{enumerate}
\item {What is the dimension of the space $S_{\frac{k}{2}}(M,\boldsymbol{\chi},F)$?}
\item{If it is non-zero, can we compute a basis for this space?}
\end{enumerate}

It turns out that the answer to these questions is extremely delicate, and relies in a fundamental way on the representation theory of the metaplectic covers of $SL_{2}$  and $GL_{2}$. The answer is rendered complicated by two main factors: firstly, that a certain global theta correspondence is trivial when a certain $L$-function vanishes, and secondly, that there are severe local complications in the representation theory of the metaplectic group. The first causes a natural construction of forms in $S_{\frac{k}{2}}(M,\boldsymbol{\chi},F)$ to vanish, while the second shows that there is no theory of newforms on the half-integral side, and that the space one is trying to construct may have high dimension.

The construction of forms in $S_{\frac{k}{2}}(M,\boldsymbol{\chi},F)$ if non-empty for some $M$ and $\boldsymbol{\chi}$ was taken up by Shintani \cite{shintani}, and solved in general by Flicker \cite{flicker}. It is not at all clear, $a$ priori, which characters $\boldsymbol{\chi}$ and which integers $M$ one has to take in order to obtain a non-zero space.

On the other hand, the questions of finding the dimension and a basis for $S_{\frac{k}{2}}(M,\boldsymbol{\chi},F)$ were taken up by Waldspurger in \cite{waldspurger}. In fact, Waldspurger proved that, under quite general conditions on $F$, $N$ and $\boldsymbol{\chi}$, there exists a basis for $S_{\frac{k}{2}}(M,\boldsymbol{\chi},F)$ such that for every positive integer $n$ the Fourier coefficient $a_{n}(f_{i})$ of a basis element $f_{i}$ is the product of two factors: a product of local terms $c_{i}(n,F)$ each of which is completely determined by the local components of $F$ according to explicit formulae given in \cite{waldspurger}, and a global factor $A_{F}(n)$ whose square is the central critical value of the $L$-function of the newform $F$ twisted with a quadratic character depending on $n$. 

A more detailed discussion of the representation-theoretic subtleties of this circle of questions is given in Section 2 below. The main point of this paper is to answer these questions in the simplest case, when $\boldsymbol{\chi}$ is trivial, and $M = 4N$, with $N$ odd and square-free. More precisely, we will compute a basis for $S_{\frac{k}{2}}(\Gamma_{0}(4N),F)$ for an odd square-free integer $N$ and $k\equiv3\mod{4}$ provided that $L(F,\frac{1}{2})\neq0$ $($the $L$-function is normalized so that the functional equation is with respect to $s\rightarrow 1-s$ rather than $s\rightarrow k-1-s$$)$. In the light of Waldspurger's work, our task is reduced to computing the global factors $A_{F}(n)$.  

The organization of the paper is as follows. In Section 1, we present a construction of a half-integral weight modular form $g$ which belongs to the Kohnen subspace of $S_{\frac{k}{2}}(\Gamma_{0}(4N),F)$ for a given newform $F\in S_{k-1}^{\mathrm{new}}(\Gamma_{0}(N))$. The squares of the Fourier coefficients of this form are essentially proportional to the central critical values of the $L$-function of $F$ twisted with some quadratic characters. In Section 2 we digress briefly upon the representation-theoretic interpretation of the Shimura correspondence as a theta correspondence. In Section 3, we state the full result of Waldspurger. Next, we show that the space $S_{\frac{k}{2}}(\Gamma_{0}(4N),F)$ is in fact two-dimensional and has a distinguished basis $\{f_{1},f_{2}\}$ such that either $f_{1}-f_{2}$ or $f_{2}$ belongs to the Kohnen subspace. We use this to express the global factors $A_{F}(n)$ in terms of the coefficients of $g$ (Theorem \ref{prop}). Finally, we explicitly determine the Fourier coefficients of the two modular forms that generate $S_{\frac{k}{2}}(\Gamma_{0}(4N),F)$, thus, arriving at our main contribution given in Theorem \ref{main}. The last section contains examples to illustrate the calculations carried out in Section $3$.

In conclusion, we remark that the problem of finding a basis for $S_{\frac{k}{2}}(M,\boldsymbol{\chi},F)$ (in a more general setting than the one assumed in our present work) can undoubtedly be solved by representation-theoretic techniques, and by generalizing the framework sketched in Section 2 of the present article. We hope to take this up in a future work.

\section{A Modular Form in $S_{\frac{k}{2}}(\Gamma_{0}(4N),F)$}

We first recall the definition of the $L$-function associated to a newform $F\in S_{k-1}^{\mathrm{new}}(\Gamma_{0}(N))$. Following Waldspurger in \cite{waldspurger}, we set $$L(F,s)=2(2\pi)^{1-s-k/2}N^{\frac{s-1+k/2}{2}}\Gamma(s-1+k/2)\sum_{n=1}^{\infty}b_{n}(F)n^{1-s-k/2}\text{ }\text{ }\text{ }\text{ }\operatorname{Re}s>\frac{3}{2}.$$ Then $L(F,s)$ extends to an entire function which satisfies a functional equation with respect to $s\rightarrow1-s$. Moreover, since we assume at the outset that $F$ is a newform with the trivial character, $F$ is also an eigenform for the Atkin-Lehner Operator $W_{N}$, so that $W_{N}F=w_{N}F$ for some $w_{N}\in\mathbb{C}$. Hence, the functional equation for $L(F,s)$ takes on the form: $$L(F,s)=i^{k-1}w_{N}L(F,1-s).$$
It is known that $W_{N}$ is an involution when $k-1$ is even, so $w_{N}=\pm1$. In particular, if $k\equiv3\mod4$ and $L(F,\frac{1}{2})\neq0$, then the root number in the functional equation is $i^{k-1}w_{N}=-w_{N}=1$. Otherwise, $L(F,\frac{1}{2})$ would vanish trivially.

Let $F$ be a newform in $S_{2}^{\mathrm{new}}(\Gamma_{0}(N))$ with an odd square-free level $N$ such that $L(F,\frac{1}{2})\neq0$. For each prime divisor $p$ of $N$ we denote by $w_{p}$ the eigenvalue of the Atkin-Lehner involution $W_{p}$. Since $w_{N}=\prod w_{p}$ and $w_{N}=-1$ by the above discussion, the set $S=\{p|N: w_{p}=-1\}$ has an odd cardinality. Notice that if we consider a newform of higher even weight $k-1$, then $|S|$ is odd if we also assume that $k\equiv3\mod{4}$. 

Consider the definite quaternion algebra $B$ over $\mathbb{Q}$ ramified at $S$ and $\infty$. Let $O$ be an order in $B$ such that $O_{q}$ is maximal in $B_{q}$ for all $q\in S$ and is of index $p$ in a maximal order for the remaining primes $p$ dividing $N$. Such an order is called an Eichler order of square-free level $N$ in $B$. The relation between orders of level $N$ and modular forms on $\Gamma_{0}(N)$ will be clear in what follows. 
   
The  Brandt module which we shall denote by $X(\mathbb{R})$ is the free abelian group on the left ideal classes of a  quaternion order of level $N$ along with a natural Hecke action determined by the Brandt matrices. Pizer proved in \cite{Pizer} that there exists a Hecke algebra isomorphism between the Brandt module and a subspace of modular forms containing all the newforms of level $N$ (\cite{Pizer}, Corollary 2.29 and Remark 2.30).  In the representation theoretic language of Jacquet-Langlands, Pizer's result  can be interpreted as giving a correspondence between automorphic forms on the adelization $B(\mathbb{A})$ and automorphic forms on $GL(2,\mathbb{A})$.
    
Recall that a left $O$-ideal $I$ is a lattice in $B$ such that $I_{p}=O_{p}a_{p}$ (for some $a_{p}\in B_{p}^{*}$) for every prime $p$. The set of all such ideals is denoted by $C$. Two left $O$-ideals $I$ and $J$ are equivalent if there exists $a\in B^{*}$ such that $I=Ja$. This gives rise to the set $\bar{C}=\{[I_{1}],[I_{2}],...,[I_{H}]\}$ of left $O$-ideal classes which has a finite cardinality $H$. We also define the norm of an ideal $I$ to be the positive rational number which generates the fractional ideal of $\mathbb{Q}$ generated by $\{N(x): x\in I\}$. Here $N(x)$ denotes the reduced norm of the quaternion element $x$.

Define the right order of a left $O$-ideal $I$ to be the set $O_{r}(I)= \{a\in B: Ia\subseteq I\}$; this is also an order of level $N$ in $B$.

Denote by $X$ the free abelian group with basis $\bar{C}$, the set of left $O$-ideal classes. We shall now illustrate the well-known construction of Hecke operators acting on the $\mathbb{R}$-vector space  $X(\mathbb{R})=X\otimes_{\mathbb{Z}}\mathbb{R}$. We define a height pairing $($ , $)$ on $X$ with integer values by setting $([I_{i}],[I_{j}])=0$ if $[I_{i}]\neq[I_{j}]$ and $([I_{i}],[I_{i}])=\frac{1}{2}\#O_{r}^{*}(I_{i})$, then extending bi-additively. This height pairing induces an inner product on $X(\mathbb{R})$. For each $n\geq1$, the Hecke operator $t_{n}:X(\mathbb{R})\rightarrow X(\mathbb{R})$ is defined by : $$t_{n}([I_{i}])=\sum_{j=1}^{H}(B(n))_{ij}[I_{j}].$$ The entries of the Brandt matrix $B(n)$ are calculated using $$(B(n))_{ij}=\frac{1}{e_{j}}\times\# \{x\in {I_{j}}^{-1}I_{i}:\frac{N(x)}{N({I_{j}}^{-1}I_{i})}=n\},$$ where $$e_{j}=\#\{x\in O_{r}(I_{j}): N(x)=1\}=\#O_{r}^{*}(I_{j}).$$
In other words, $e_{j}(B(n))_{ij}$ is the $n^{th}$ coefficient in the Fourier expansion of the theta series $$\theta_{ij}{(z)=\sum_{x\in {I_{j}}^{-1}I_{i}}q^{\frac{N(x)}{N({I_{j}}^{-1}I_{i})}}}\text{ }\text{ }\text{ }\text{ }\text{ }\text{ }(q=exp(2\pi i z)).$$ In \cite{Pizer}, Pizer described an explicit algorithm to compute these matrices. The main procedure computes the number of times $Q(x)=\frac{N(x)}{N(I)}$ represents $1,2,3,..,T$ for some given $T$ as $x$ varies  over the lattice $I$ in our quaternion algebra. The graph of $Q(x)$ with $x\in\mathbb{R}^{4}$ is a $4$-dimensional paraboloid which has a unique minimum point, and as we move away from this point in any direction, the values given by $Q(x)$ will always increase. That was the very simple  idea behind Pizer's method. 

The Hecke operators $t_{n}$ generate a commutative ring $\mathbb{T}$ of self adjoint operators $($\cite{Pizer}, Proposition $2.22$$)$. The spectral theorem implies that $X(\mathbb{R})$ has an orthogonal basis of eigenvectors for $\mathbb{T}$. 

As outlined in the introduction, we shall now construct a non-zero modular form $g$ in the Kohnen subspace of  $S_{\frac{3}{2}}(\Gamma_{0}(4N),F)$ as a linear combination of theta series generated by the norm form of $B$ evaluated on some ternary lattices. Needless to say, the ideas here are all well-known and largely drawn from articles \cite{gross} and \cite{BoSc}. Our main result is in Section 3 where we use the form $g$ to compute yet another half-integral weight modular form $h$ which also maps to $F$ via the Shimura correspondence. These two forms make up a basis for the space $S_{\frac{k}{2}}(\Gamma_{0}(N),F)$. 

For every left $O$-ideal $I_{i}$ in $C$, let $O_{i}$ be its right order, and let $R_{i}$ be the subgroup of trace zero elements in the suborder $\mathbb{Z}+2O_{i}.$ For every left $O$-ideal class $[I_{i}]$, we associate the ternary theta series \begin{equation} \label{eq:g}
g([I_{i}])=\frac{1}{2}\sum_{x\in R_{i}}q^{N(x)}=\frac{1}{2}\sum_{D\geq0}a_{D}([I_{i}])q^{D},
\end{equation} then extend this association by linearity to $X(\mathbb{R})$. Since, the ternary quadratic form $N(x)$ for $x \in R_{i}$ is a positive definite integral quadratic form with level $4N$ and a square discriminant, these modular forms have weight $\frac{3}{2}$, level $4N$ and the trivial character. 

The determination of the theta series $g([I_{i}])$ up to a precision $T$ amounts to computing the number of times  $N(x)$ represents $1,2,3,...,T$ as $x$ varies over all trace zero elements in $\mathbb{Z}+2O_{i}$. Therefore, it takes time roughly proportional to $T^{\frac{3}{2}}$. We compute these theta series by implementing a method similar to that of Pizer in \cite{Pizer} $($see Section 4 for examples$)$.

The action of the (half-integral) Hecke operators $T_{p^{2}}$ on the theta series $g(I)$ for $I\in X(\mathbb{R})$ is compatible with the action of the Hecke operators $t_{n}$ on $I$. More precisely, one can show that $T_{p^{2}}(g(I))=g(t_{p}(I))$ for all $p\nmid4N$ and all $I\in X(\mathbb{R})$ $($see Proposition 1.7 in \cite{tornaria-pacetti}$)$.
\begin{theorem}\label{quadratictwist}
$($\cite{BoSc} Theorem 3.2$)$ If $I_{F}$ is a non-zero element in the $F$-isotypical component of $X(\mathbb{R})$, then $g=g(I_{F})$ is in the Kohnen subspace of $S_{\frac{3}{2}}(\Gamma_{0}(4N),F)$, and $g(I_{F})$ is non-zero if and only if $L(F,\frac{1}{2})\neq0$. \end{theorem}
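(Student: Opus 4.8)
The plan is to split the statement into three parts and treat them in order: that $g(I_F)$ is a weight-$\tfrac32$ form on $\Gamma_0(4N)$ mapping to $F$ under Shimura, that it lies in the Kohnen subspace, and finally the nonvanishing criterion; the last of these will be the real work.

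First I would check that $g\colon X(\mathbb{R})\to S_{\frac32}(\Gamma_0(4N))$ is well-defined and Hecke-equivariant. Each $g([I_i])$ is the theta series of the positive-definite ternary lattice $(R_i,N)$, which has level $4N$, trivial character and square discriminant, so by the standard theory of ternary theta series (as recorded in the excerpt) it is a weight-$\tfrac32$ form on $\Gamma_0(4N)$; thus $g$ lands in $M_{\frac32}(\Gamma_0(4N))$. The compatibility $T_{p^2}g(I)=g(t_pI)$ for $p\nmid 4N$ (Proposition 1.7 of \cite{tornaria-pacetti}) shows $g$ intertwines the Brandt Hecke action with the half-integral one. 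Since $I_F$ is in the $F$-isotypical component, $t_pI_F=b_p(F)\,I_F$, hence $T_{p^2}g(I_F)=b_p(F)\,g(I_F)$ for almost all $p$. Matching Euler factors in Theorem \ref{thm:shimura2} (with $k=3$ and trivial character, so the half-integral eigenvalue $\lambda_p$ equals $b_p(F)$) identifies $g(I_F)$, if nonzero, as an element of $S_{\frac32}(\Gamma_0(4N),F)$; cuspidality and membership in the genuine space follow because its eigenvalues are those of the cusp form $F$ and so cannot match those of an Eisenstein series or of a Shimura theta series $\theta_{\psi,m}$.

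Next I would verify the Kohnen congruence, namely $a_D([I_i])=0$ unless $D\equiv 0,3\pmod 4$, as a purely local computation at $p=2$. Since $2\nmid N$, the order $O_i$ is maximal at $2$ and $O_i\otimes\mathbb{Z}_2\cong M_2(\mathbb{Z}_2)$, so $R_i\otimes\mathbb{Z}_2$ is the lattice of trace-zero elements of $\mathbb{Z}_2+2M_2(\mathbb{Z}_2)$. Writing such an element explicitly and computing its reduced norm modulo $4$ should show that the doubling $2O_i$ is exactly what forces the represented values $N(x)$ into the residues $0$ and $3\bmod 4$ allowed in the plus space for weight $\tfrac32$ and trivial character; carrying this out over $\mathbb{Z}_2$ gives the congruence for $g([I_i])$ and hence for $g(I_F)$.

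The substance of the theorem, and the step I expect to be hardest, is the equivalence $g(I_F)\neq 0\iff L(F,\tfrac12)\neq 0$. My approach would be to compute the Petersson norm $\langle g(I_F),g(I_F)\rangle$ by a Rankin--Selberg unfolding, equivalently by the Rallis inner product/seesaw identity underlying the theta correspondence, obtaining an identity of the shape $\langle g(I_F),g(I_F)\rangle=c\cdot(I_F,I_F)\cdot L(F,\tfrac12)$, where $(\,,\,)$ is the height pairing on $X(\mathbb{R})$ and $c$ is an explicit product of archimedean and local zeta factors. Since $I_F\neq 0$ and the height pairing is positive definite, $(I_F,I_F)>0$, so once one checks $c\neq 0$ the equivalence follows in both directions at once. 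The genuine difficulties are controlling all the local and archimedean constants so as to guarantee $c\neq 0$ --- in particular that no local obstruction appears at the ramified primes $S$ or at $2$, which is precisely where Waldspurger's local theory in \cite{waldspurger} enters --- and verifying that the central value occurring is that of $F$ itself rather than a nontrivial quadratic twist, which uses the standing hypotheses $k\equiv 3\pmod 4$ and $w_N=-1$ forcing the root number to be $+1$. Alternatively one may bypass the norm computation and invoke Waldspurger's explicit formula directly, writing $|a_D(I_F)|^2$ as a nonzero multiple of $L(F\otimes\chi_D,\tfrac12)$ and specializing to the fundamental discriminant giving the untwisted value as in \cite{gross}; either way the crux is the nonvanishing of the proportionality constant.
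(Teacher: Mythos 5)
The paper does not prove this statement at all: it is imported verbatim from B\"ocherer--Schulze-Pillot (\cite{BoSc}, Theorem 3.2), so there is no internal proof to compare yours against. Judged on its own terms, your sketch is a reasonable reconstruction of the strategy behind that theorem. The Hecke-equivariance step is exactly how the paper frames the lift (via $T_{p^2}g(I)=g(t_{p}I)$ from \cite{tornaria-pacetti}); for cuspidality the concrete point is that each $g([I_{i}])$ has constant term $\tfrac{1}{2}$, and it is precisely the $F$-isotypical (hence Eisenstein-orthogonal) combination $I_{F}$ that kills the constant terms. The Kohnen congruence is even easier than your $2$-adic plan: a trace-zero element of $\mathbb{Z}+2O_{i}$ has the form $x=-\mathrm{tr}(b)+2b$ with $b\in O_{i}$, whence $N(x)=4N(b)-\mathrm{tr}(b)^{2}\equiv 0,3\pmod{4}$ globally, with no localization needed. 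Your main route to the nonvanishing criterion --- a Rallis inner product identity $\langle g(I_{F}),g(I_{F})\rangle=c\,(I_{F},I_{F})\,L(F,\tfrac{1}{2})$, with the crux being $c\neq0$ for the specific Eichler-order test vectors at the ramified primes and at $2$ --- is indeed the substance of the Waldspurger/B\"ocherer--Schulze-Pillot argument, and you locate the genuine difficulty correctly.

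One caveat: your proposed shortcut at the end is circular as stated. Waldspurger's coefficient formula (in Kohnen--Zagier form) expresses $|a_{D}|^{2}$ as a constant times $L(F\otimes\chi_{-D},\tfrac{1}{2})\,\langle g,g\rangle$, so it is vacuous when $g=0$ and cannot detect whether $g(I_{F})\neq0$. What one needs for this particular lift is the Gross-type formula, in which the constant is absolute and the coefficient squared is proportional to the \emph{product} $L(F,\tfrac{1}{2})\,L(F\otimes\chi_{-D},\tfrac{1}{2})$; that product form, combined with the known nonvanishing of some quadratic twist, is exactly what makes the equivalence with $L(F,\tfrac{1}{2})\neq0$ come out. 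Proving the product formula again requires the period or inner-product computation on the quaternion side, so this ``alternative'' does not bypass the hard step --- it is the hard step in different notation.
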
 
We compute the eigenvector $I_{F}$ by using the Brandt module package in Sage. Hence, if $\displaystyle{I_{F}=\sum_{i=1}^{H}e_{i}[I_{i}]}$, then $g$ is computed as the linear combination $\displaystyle{\sum_{i=1}^{H}e_{i}g([I_{i}])}$. 

The above procedure can be generalized to compute modular forms of higher weights by using what are known as generalized theta series. Given a newform $F$ in $S_{k-1}(\Gamma_{0}(N))$ with an odd square-free level $N$ such that $k\equiv3\mod{4}$ and $L(F,\frac{1}{2})\neq 0$, the work of B\"{o}cherer and Schulze-Pillot guarantees the existence of a non-zero modular form $g=g_{F}$ in the Kohnen subspace of $S_{\frac{k}{2}}(\Gamma_{0}(4N),F)$. The form $g$ is obtained as a linear combination of generalized theta series attached to the ternary quadratic form $N(x)$ and some homogeneous harmonic polynomials of degree $\frac{k-3}{2}$ in three variables. The reader is referred to \cite{BoSc} for a thorough discussion of this construction. 

\section{The Shimura Correspondence and Theta Lifts}

In the preceding treatment, we have only considered half-integral weight modular forms with the trivial character. A naturally arising problem is to compute a half-integral weight modular form $g$ with a quadratic character $\boldsymbol{\chi}$, which belongs to the Kohnen subspace and maps to $F$ via the Shimura correspondence. 
Unfortunately, the above classical construction does not generalize in any obvious or simple way to yield such a form. In theory, however, we are  guaranteed the existence of $g$ $($under some restrictions on $\boldsymbol{\chi}$$)$ by a beautiful result of Baruch and Mao $($Theorem 10.1 in \cite{BaMa}$)$ which, in parts, states the following. 

Consider a newform $F$ of even weight $k-1$, odd square-free level $N$ and of trivial character. Let $S_{N}$ be the set of primes dividing $N$, and let $S$ be a subset of $S_{N}$. Write $N'=\prod_{p\in S}p$, and let $\boldsymbol{\chi}=\prod_{p|2N}\boldsymbol{\chi}_{(p)}$ be any even Dirichlet character defined modulo $4NN'$ such that $\boldsymbol{\chi}_{(p)}\equiv1$ when $p | (N/N')$ and $\boldsymbol{\chi}_{(p)}(-1)=-1$ when $p|N'$. Set $\chi=\boldsymbol{\chi}.\chi_{-1}^{s}$ where $\chi_{-1}$ is the Dirichlet character modulo $4$ defined by $\chi_{-1}(n) =\left(\frac{-1}{n}\right)$ and $s$ is the size of $S$. In particular, $\chi$ is unramified at the prime $2$. There exists a unique (up to scalar multiple) cusp form $g_{S}$ that is in the Kohnen subspace of $S_{\frac{k}{2}}(4NN',\boldsymbol{\chi},F_{\chi})$. The squares of the Fourier coefficients of $g_{S}$ are related to the central values of the quadratic twists of the $L$-function of $F$. 

In \cite{BaMa}, the construction of $g_{S}$ is of adelic nature because the authors worked in the setting of automorphic representations. A careful translation from the adelic language to the modular form language is needed to write down an explicit expression of $g_{S}$ as a linear combination of classical theta series. We hope to pursue this work in a future paper.

Before we proceed into the next section, some remarks on the representation theoretic version of the Shimura correspondence seem to be in order. The theta correspondence for the pairs $(\widetilde{SL_{2}}, PB^{*})$ and $(\widetilde{SL_{2}},PGL_{2})$ lies at the heart of the above classical constructions. Following Waldspurger's treatment in \cite{waldspurger1} and \cite{waldspurger}, the half-integral weight modular form $g$ can be realized as a theta lift from $PGL_{2}(\mathbb{A})$ to $\widetilde{SL_{2}(\mathbb{A})}$, the two-fold metaplectic cover of $SL_{2}(\mathbb{A})$. Moreover, one can obtain $g$ as a theta lift from $PB^{*}(\mathbb{A})$ to $\widetilde{SL_{2}(\mathbb{A})}$ $($see \cite{waldspurger2}$)$. To clarify things in the reader's mind, let us discuss these ideas briefly.
 
Let $\tilde{A}'_{\frac{k}{2}}(M,\chi_{0})$ denote the space of cuspidal automorphic forms $\tilde{\phi}=\otimes_{v}\tilde{\phi}_{v}$ on $S_{\mathbb{Q}}\backslash\widetilde{SL_{2}(\mathbb{A})}$ with character $\chi_{0}=\boldsymbol{\chi}.\chi_{-1}^{\frac{k-1}{2}}$ and level $M$. This space is determined by a set of local conditions satisfied by the vectors $\tilde{\phi}_{v}$ $($see \cite{waldspurger} pp. 381--388 for notation and other details$)$. It is known that there is a natural bijection between $S'_{\frac{k}{2}}(M,\boldsymbol{\chi})$ and $\tilde{A}'_{\frac{k}{2}}(M,\chi_{0})$. Thus, half-integral weight modular forms may be viewed as automorphic forms on $S_{\mathbb{Q}}\backslash\widetilde{SL_{2}(\mathbb{A})}$ whenever convenient. In this framework, the Shimura correspondence has the following formulation. Suppose that $g\mapsto F$ under the Shimura map. Let $\tilde{\pi}$ be the automorphic representation of $S_{\mathbb{Q}}\backslash\widetilde{SL_{2}(\mathbb{A})}$ associated to $g$, and let $\pi$ be the automorphic representation of $PGL_{2}(\mathbb{A})$ associated to $F$. Then, $S_{\psi}(\tilde{\pi})=\pi\otimes\chi_{0}^{-1}$, where $\psi$ is the usual additive character on $\mathbb{A}/\mathbb{Q}$. The space $S_{\psi}(\tilde{\pi})$ is defined in \cite{waldspurger1} as $S_{\psi}(\tilde{\pi})=\Theta(\tilde{\pi},\psi^{\nu})\otimes\chi_{\nu}$ for any choice of $\nu\in\mathbb{Q}^{*}$ such that $\Theta(\tilde{\pi},\psi^{\nu})\neq0$.

Denote  by $\pi'$ the automorphic representation of $PB^{*}$ associated to $\pi$ via the Jacquet-Langlands correspondence.  In the previous section, the form $g$ is obtained as a special vector in the representation space of $\Theta(\pi',\psi)$ using the theta correspondence for the pair $(\widetilde{SL_{2}}, PB^{*})$. Notice that $g$ coincides (up to scalar multiple) with the form $g_{S}$ for $S=\phi$. This follows directly from the theorem described in the second paragraph of the present section since $N'=1$ if $\boldsymbol{\chi}$ is the trivial character.

We shall now shed some light on the construction of the form $g_{S}$ for any $S$ as carried out in \cite{BaMa}. Using the theta correspondence of the pair $(\widetilde{SL_{2}},PGL_{2})$, the form $g_{S}$ is obtained as a special vector in the representation space $V_{\tilde{\pi}^{D}}$ of $$\Theta(\pi\otimes\chi_{D},\psi^{|D|})=\tilde{\pi}^{D}=\otimes_{v}\tilde{\pi}^{D}_{v}$$ for some fundamental discriminant $D$ determined by $S$.

Let $h$ be a non-zero modular form in  $S_{\frac{k}{2}}(4NN',\boldsymbol{\chi},F_{\chi})$, and denote by $\tilde{\pi}_{h}$ the automorphic representation of $S_{\mathbb{Q}}\backslash\widetilde{SL_{2}(\mathbb{A})}$ generated by $h$. One gets that $\tilde{\pi}_{h}=\tilde{\pi}_{g_{S}}=\tilde{\pi}^{D}$. Moreover, the space $V'_{\tilde{\pi}^{D}}=\tilde{A}_{\frac{k}{2}}(4NN',\chi_{0})\cap V_{\tilde{\pi}^{D}}$ is two-dimensional. In the next section, we verify this fact for $S=\phi$. Roughly speaking, this owes to the fact that the space of vectors in $\tilde{\pi}^{D}_{2}$ that satisfy the $2$-adic condition forced on a form $\tilde{\phi}\in\tilde{A}_{\frac{k}{2}}(4NN',\chi_{0})$ is two-dimensional. This is the space $V'_{{\tilde{\pi}^{D},2}}$ in the factorization $V'_{\tilde{\pi}^{D}}=\otimes_{v}V'_{\tilde{\pi}^{D},v}$. Moreover, if we let $\tilde{A}^{+}_{\frac{k}{2}}(4NN',\chi_{0})$ be the image of the Kohnen subspace under the natural bijection $S_{\frac{k}{2}}(4NN',\boldsymbol{\chi})\rightarrow\tilde{A}_{\frac{k}{2}}(4NN',\chi_{0})$, we get that $\tilde{A}^{+}_{\frac{k}{2}}(4NN',\chi_{0})\cap V_{\tilde{\pi}^{D}}$ is one-dimensional, and $g_{S}$ is a non-zero vector in this space. In fact, $\tilde{A}^{+}_{\frac{k}{2}}(4NN',\chi_{0})$ turns out to be the space of forms $\tilde{\phi}=\otimes_{v}\tilde{\phi}_{v}$ in $\tilde{A}_{\frac{k}{2}}(4NN',\chi_{0})$ with $\tilde{\phi}_{2}$ being a carefully chosen vector in $V'_{{\tilde{\pi}^{D},2}}$ $($see Section 9.4 in \cite{BaMa}$)$. It is exactly this choice of $\tilde{\phi}_{2}$ that forces $g_{S}$ to lie in the Kohnen subspace.

\section{On The Result Of Waldspurger}

First, we state Waldspurger's theorem in its most general form. Let $F\in S_{k-1}^{\mathrm{new}}(\boldsymbol{\chi}^{2})$, and let $\pi$ be the irreducible automorphic representation of $GL_{2}(\mathbb{A})$ associated to $F$. The newform $F$ is required to satisfy the following condition: when $\pi_{p}=\pi(\mu_{1,p},\mu_{2,p})$ belongs to the principal series, then both characters $\mu_{1,p}$ and $\mu_{2,p}$ are even. Notice that $\mu_{1,p}(-1)=\mu_{2,p}(-1)=1$ whenever $\pi_{p}$ is unramified principal series. In fact, Flicker proved that this condition is satisfied if and only if $S_{\frac{k}{2}}(L,\boldsymbol{\chi},F)\neq 0$ for some positive integer $L$. We also require that one of the following conditions is satisfied:
\begin{enumerate}
\item{The level of $F$ is divisible by $16$}
\item{The conductor of the character $\chi_{0}$ is divisible by $16$ $($$\chi_{0}(n)=\boldsymbol{\chi}(n)\left(\frac{-1}{n}\right)^{\frac{k-1}{2}}$$)$}
\item{$\pi_{2}$ is not supercuspidal}
\end{enumerate}
For every positive integer $e$ and prime number $p$, Waldspurger used the local components of $\pi$ and $\chi_{0}$ to define an integer $n_{p}$ and a set $U_{p}(e,F)$ of finitely many functions $c_{p}:\mathbb{Q}_{p}^{*}\rightarrow\mathbb{C}$. According to these definitions, given an integer $E\geq1$, for all but finitely many $p$ the set $U_{p}(v_{p}(E),F)$ is a singleton and $n_{p}=0$. In fact, we are interested in the finite set $\displaystyle{U(E,F)=\prod_{p\text{ }prime}U_{p}(v_{p}(E),F)}$. 

Let $\mathbb{N}^{\mathrm{sf}}$ be the set of positive square-free integers, and let $A:\mathbb{N}^{\mathrm{sf}}\rightarrow\mathbb{C}$ be any function. Denote square-free part of a positive integer $n$ by $n^{\mathrm{sf}}$. Given an element $c_{E}=(c_{E,p})\in U(E,F)$, we define the function $f(c_{E},A)$ on $\mathbb{H}$ by: $$ f(c_{E},A)=\sum_{n\geq1}a_{n}(c_{E},A)q^{n},$$ where $$a_{n}(c_{E},A)=A(n^{\mathrm{sf}})n^{\frac{k-2}{4}}\prod_{p}c_{E,p}(n).$$
The integer $\displaystyle{\prod_{p}p^{n_{p}}}$ is denoted by $N(F)$, and the complex vector space generated by the set $\{f(c_{E},A_{F})\}_{c_{E}\in U(E,F)}$ is denoted by $U(E,F,A_{F})$.

Given a Dirichlet character $\nu$, let $L(\nu,s)$ be the associated $L$-function. Recall that, for sufficiently large $\operatorname{Re}s$, $L(\nu,s)$ is defined as $$L(\nu,s)=\pi^{-\frac{s+\delta}{2}}\Gamma\left(\frac{s+\delta}{2}\right)\sum_{n=1}^{\infty}\hat{\nu}(n)n^{-s},$$ where $\delta$ is such that $\nu(-1)=(-1)^{\delta}$ and $\hat{\nu}$ is the primitive Dirichlet character associated to $\nu$. It is known that $L(\nu,s)$ has a meromorphic continuation to all $s$. Define $\epsilon(\nu,s)$ as the $\epsilon$-factor that appears in the functional equation $L(\nu^{-1},1-s)=\epsilon(\nu,s)L(\nu,s)$.

 For any $t\in\mathbb{Z}^{*}$, denote by $\chi_{t}$ the quadratic Dirichlet character associated with the extension $\mathbb{Q}(\sqrt{t})/\mathbb{Q}$. Notice that $\chi_{t}(n)=(\frac{\Delta_{t}}{n})$, where $\Delta_{t}$ is the discriminant of $\mathbb{Q}(\sqrt{t})$. In particular, $\chi_{t}(n)=1$ for all $n\in \mathbb{Z}^{*}$ if $t$ is a perfect square.

\begin{theorem}\label{waldspurger2}
$($See \cite{waldspurger}$)$ Given a newform $F$ satisfying the above hypotheses, there exists a function $A_{F}:\mathbb{N}^{\mathrm{sf}}\rightarrow\mathbb{C}$, such that:
\begin{enumerate}
\item{$(A_{F}(t))^{2}=L(F\otimes\chi_{0}^{-1}\chi_{t},\frac{1}{2})\epsilon(\chi_{0}^{-1}\chi_{t},\frac{1}{2})$} 
\item{$S_{\frac{k}{2}}(4N,F,\chi)=\bigoplus{U(E,F,A_{F})}$; the direct sum being taken over all integers $E$ such that $N(F)|E|4N$}
\end{enumerate}
\end{theorem}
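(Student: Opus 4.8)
The plan is to prove the theorem by passing to the adelic setting sketched in Section 2 and realizing the half-integral weight forms as automorphic forms on the metaplectic group. First I would use the natural bijection between $S'_{\frac{k}{2}}(M,\boldsymbol{\chi})$ and $\tilde{A}'_{\frac{k}{2}}(M,\chi_{0})$ to translate the problem into one about the cuspidal automorphic representation $\tilde{\pi}$ of $\widetilde{SL_{2}(\mathbb{A})}$ attached to the forms that map to $F$. By the theta correspondence for the pair $(\widetilde{SL_{2}},PGL_{2})$, this $\tilde{\pi}$ is obtained as a theta lift $\Theta(\pi\otimes\chi_{D},\psi^{|D|})=\tilde{\pi}^{D}$ of the representation $\pi$ associated to $F$ (suitably twisted by $\chi_{0}^{-1}$), for a fundamental discriminant $D$ determined by the support conditions. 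The key computational input is the Shimura--Shintani--Waldspurger expansion expressing the $t$-th Fourier coefficient of a vector in $\tilde{\pi}$ as a product of a global toric period of the corresponding $GL_{2}$-vector against the quadratic character $\chi_{t}$, multiplied by purely local Whittaker functionals at each place.

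Next I would exploit the factorization $\tilde{\pi}=\otimes_{v}\tilde{\pi}_{v}$ as a restricted tensor product. Since both the global period and the Fourier coefficient factor through local data, the space of forms of prescribed level and character decomposes as $\otimes_{v}V'_{\tilde{\pi},v}$, where each $V'_{\tilde{\pi},v}$ is the space of local vectors satisfying Waldspurger's local conditions at $v$. For all but finitely many $p$ this local space is one-dimensional, carrying a canonically normalized spherical Whittaker function; this yields the singleton $U_{p}(v_{p}(E),F)$ and $n_{p}=0$. At the finitely many bad primes dividing $4N$ (including $p=2$) the representation $\tilde{\pi}_{p}$ is more intricate, and I would classify the admissible local vectors explicitly, reading off from each a local Whittaker coefficient $c_{p}:\mathbb{Q}_{p}^{*}\to\mathbb{C}$; the resulting finite collection is exactly $U_{p}(v_{p}(E),F)$. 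Tracking how the allowable local exponent at each bad prime ranges as $E$ varies over divisors with $N(F)\mid E\mid 4N$ produces the indexing set $U(E,F)$ and the direct-sum decomposition of part (2); distinct $E$ give independent families because they are distinguished by their local components at the ramified places.

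It then remains to identify the global factor. Writing the $t$-th coefficient as $A_{F}(t^{\mathrm{sf}})\,t^{\frac{k-2}{4}}\prod_{p}c_{E,p}(t)$, the global toric period is, by Waldspurger's central value formula, proportional to the square root of the central critical value of the twisted $L$-function. Squaring, and normalizing the epsilon- and archimedean factors in accordance with the conventions fixed for $L(\nu,s)$ and $\epsilon(\nu,s)$, I would obtain $(A_{F}(t))^{2}=L(F\otimes\chi_{0}^{-1}\chi_{t},\frac{1}{2})\,\epsilon(\chi_{0}^{-1}\chi_{t},\frac{1}{2})$, which is part (1). Since $\chi_{t}$ depends only on $t^{\mathrm{sf}}$, this simultaneously shows that $A_{F}$ is a well-defined function on $\mathbb{N}^{\mathrm{sf}}$ and is independent of $E$, so that it factors out of the local analysis as a single global function.

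The hardest part will be the local representation theory at the bad primes, and especially at $p=2$: pinning down the exact dimension of each $V'_{\tilde{\pi},p}$ and writing out the finitely many local Whittaker functions $c_{p}$ requires a complete understanding of the local metaplectic representations together with explicit computations in the Weil representation, as well as careful bookkeeping of local root numbers so that $\prod_{p}c_{E,p}$ combines correctly with the global period. The hypotheses on $F$ (evenness of the principal-series characters $\mu_{1,p},\mu_{2,p}$, and conditions (1)--(3) controlling the $2$-part of the level or of $\chi_{0}$) are precisely what is needed to tame this local behaviour, in particular to exclude the degenerate supercuspidal situation at $2$; verifying that these hypotheses suffice is the technical heart of the argument.
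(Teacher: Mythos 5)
There is no internal proof to compare against here: the paper states this theorem with the attribution ``(See \cite{waldspurger})'' and uses it as a black box. Everything in it --- the sets $U_{p}(e,F)$, the integers $n_{p}$, the functions $c_{E,p}$, and the two assertions --- is quoted from Waldspurger's paper, and the only material in Section 2 of the present article is an informal description of the same representation-theoretic framework your sketch invokes. So your proposal should be judged as an attempted reconstruction of Waldspurger's own argument, and at the level of architecture it is faithful: the passage to automorphic forms on $\widetilde{SL_{2}(\mathbb{A})}$, the theta correspondence with $PGL_{2}$, the restricted tensor product factorization with one-dimensional local spaces at good primes and finite-dimensional spaces at primes dividing $4N$, and the role of the hypotheses on $\mu_{1,p},\mu_{2,p}$ and on $\pi_{2}$ are all genuinely the skeleton of Waldspurger's proof.

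That said, as a proof the proposal has real gaps, and you partly acknowledge this yourself. First, everything that makes the theorem quantitative is deferred: the explicit determination of $U_{p}(v_{p}(E),F)$ and $n_{p}$ at the ramified primes (especially $p=2$), the dimension counts for the local spaces $V'_{\tilde{\pi},p}$, and the disjointness argument showing that distinct admissible $E$ contribute linearly independent families --- this is the bulk of Waldspurger's roughly hundred pages, not a remark. Second, and more seriously, your derivation of part (1) is circular: you obtain $(A_{F}(t))^{2}=L(F\otimes\chi_{0}^{-1}\chi_{t},\frac{1}{2})\,\epsilon(\chi_{0}^{-1}\chi_{t},\frac{1}{2})$ by citing ``Waldspurger's central value formula,'' but that formula is precisely (a form of) the assertion being proved; in the original argument the identity is established, not invoked, via the earlier Shimura-correspondence results combined with Shintani-type and Rankin--Selberg computations relating squares of Fourier coefficients to central twisted $L$-values. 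A reconstruction that assumes this input proves only the shape of the coefficient formula (the local-global factorization in part (2)), not the arithmetic content of part (1).
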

Our goal is thus reduced to computing the individual spaces $U(E,F,A_{F})$. This amounts to computing the half-integral weight modular forms $f(c_{E},A_{F})$ for all $c_{E}\in U(E,F)$. Recall that given $c_{E}=(c_{E,p})\in U(E,F)$, the function $f(c_{E},A_{F})$ is written as $\displaystyle{\sum_{n\geq1}a_{n}(c_{E},A_{F})q^{n}}$, and the Fourier coefficients $a_{n}(c_{E},A_{F})$ are defined by the formula $$a_{n}(c_{E},A_{F})=A_{F}(n^{\mathrm{sf}})n^{\frac{k-2}{4}}\prod_{p}c_{E,p}(n).$$  The local factor $\displaystyle{\prod_{p}c_{E,p}(n)}$ is completely determined by the local properties of $F$ and $\chi_{0}$. In fact, the various local objects involved in this calculation are given explicitly in \cite{waldspurger}. However, the determination of the global factor $A(n^{\mathrm{sf}})$ requires more effort because we are only given its square value. One way to resolve this complication is to use the Fourier coefficients of the half-integral weight modular form $g$ that was computed in Section $1$. 

Let us now return to our usual setting. Recall that we are given a newform $\displaystyle{F=\sum_{n\geq1}b_{n}q^{n}}$ in $S_{k-1}(\Gamma_{0}(N))$ with odd square-free level $N$ such that $k\equiv3\mod{4}$ and $L(F,\frac{1}{2})\neq0$. Let $\displaystyle{g=\sum_{n\geq1}a_{n}q^{n}\in S_{\frac{k}{2}}(\Gamma_{0}(4N),F)}$ be the form obtained in Section 1.
 
For every prime number $p$ not dividing $N$, we put $\lambda_{p}=b_{p}p^{1-\frac{k}{2}}$, $\alpha_{p}+\alpha_{p}'=\lambda_{p}$, and $\alpha_{p}\alpha_{p}'=1$. We also set $\lambda_{p}'=b_{p}p^{1-\frac{k}{2}}$ for every prime divisor $p$ of $N$. We shall now apply the theorem of Waldspurger to compute a basis for the space $S_{\frac{k}{2}}(\Gamma_{0}(4N),F)$. In order to compute the local factors $\displaystyle{\prod_{p}c_{p}}$ of the Fourier coefficients of the desired basis elements, we use the explicit formulae given in \cite{waldspurger} Section $8$.

Since the newform $F$ has a square-free level and the trivial character modulo $N$, we get 
$$n_{p}=\begin{cases}
2 & \text{if }p=2;\\
1 & \text{if }p|N;\\
0 & \text{otherwise.}
\end{cases}$$
Hence, $N(F)=\prod p^{n_{p}}=4N$. We also get $$S_{\frac{k}{2}}(\Gamma_{0}(4N),F)=\bigoplus_{N(F)|E|4N}{U(E,F,A_{F})}=U(4N,F,A_{F}),$$ where $U(E,F,A_{F})$ is the vector space generated by the modular forms $\{f(c_{E},A_{F})\}_{c_{E}\in U(E,F)}$, and $U(E,F)$ is the finite set $\displaystyle{\prod_{p}U_{p}(v_{p}(E),F)}$. Recall that for a given prime number $p$ and a positive integer $E$, the set $U_{p}(v_{p}(E),F)$ consists of finitely many local functions $c_{p}$. Using the fact that $E=4N$ and $N$ is odd and square-free, we get
$$U_{2}(2,F)=\begin{cases}
\{c_{2}'[\alpha_{2}],c_{2}'[\alpha_{2}']\} & \text{if }\alpha_{2}\neq\alpha_{2}';\\
\{c_{2}'[\alpha_{2}],c_{2}''[\alpha_{2}]\}& \text{otherwise,}\\
\end{cases}$$
where the local functions $c_{2}'$ and $c_{2}''$ are defined below.

If $p$ is an odd prime we get,
$$U_{p}(v_{p}(4N),F)=\begin{cases}
\{c_{p}^{0}[\lambda_{p}]\} & \text{if }p\nmid N;\\
\{c_{p}^{s}[\lambda_{p}']\}& \text{otherwise. }\\
\end{cases}$$
Before we proceed any further, we need to recall the definition of the Hilbert symbol. Given two non-zero elements $a$ and $b$ in a local field $K$, the Hilbert symbol is defined by $$(a,b)=\begin{cases}
1 & \text{if }z^{2}=ax^{2}+by^{2}\text{ has a non-zero solution in } K^{3};\\
-1& \text{otherwise.}\\
\end{cases}$$
In particular, if $a, b\in \mathbb{Z}$, write $a=2^{\alpha}s$ and $b=2^{\beta}t$ such that $s$ and $t$ are odd integers. The Hilbert symbol over the 2-adics is $(a,b)_{2}=(-1)^{\epsilon(s)\epsilon(t)+\alpha\omega(t)+\beta\omega(s)},$ where $\epsilon(x)=\frac{x-1}{2}$ and $\omega(x)=\frac{x^{2}-1}{8}$.

In order to evaluate the local functions $c_{p}$ at an integer $n$, we need to express $n$ as $up^{2h}$ ($v_{p}(u)\in\{0,1\}$), then use the following formulae:
$$c'_{2}[\delta](n)=\begin{cases}
\delta^{h}(\delta-2^{-\frac{1}{2}}(2,u)_{2}) & \text{if }v_{2}(u)=0\text{ and }(u,-1)_{2}=-1;\\
\delta^{h}, & \text{otherwise.}
\end{cases}$$ 
$$c_{2}''[\delta](n)=\begin{cases}
\delta & \text{if }v_{2}(n)=0\text{ and }(n,-1)_{2}=-1;\\
0 & \text{if }v_{2}(n)=1 \text{ or }(v_{2}(n)=0\text{ and }(n,-1)_{2}=1);\\
\delta(c_{2}''[\delta](\frac{n}{4})+c_{2}'[\delta](\frac{n}{4})) & \text{otherwise.}
\end{cases}$$
$$c_{p}^{s}[\delta](n)=\begin{cases}
\delta^{h} & \text{if }v_{p}(u)=1;\\
2^{\frac{1}{2}}\delta^{h} & \text{if }v_{p}(u)=0\text{ and }\left(\frac{-u}{p}\right)=-p^{\frac{1}{2}}\delta;\\
0, & \text{otherwise.}
\end{cases}$$ 
\begin{equation}\label{cp0}c_{p}^{0}[\delta](n)=\begin{cases}
1 & \text{if }v_{p}(u)=0\text{ and }h=0;\\
b_{h}-b_{h-1}\left(\frac{-u}{p}\right)p^{-\frac{1}{2}} & \text{if }v_{p}(u)=0\text{ and }h\geq1;\\
b_{h} & \text{otherwise,}
\end{cases}\end{equation} where
$$b_{h}=\frac{1}{2^{h}}\left(\sum_{i=0}^{\lfloor\frac{h}{2}\rfloor}\binom{h+1}{2i+1}\delta^{h-2i}(\delta^{2}-4)^{i}\right).$$
If $\alpha_{2}\neq\alpha_{2}'$, the set $U(4N,F)$ consists of $$\displaystyle{\mathbf{c_{1}}=c_{2}'[\alpha_{2}]\prod_{p|N}c_{p}^{s}[\lambda_{p}']\prod_{p\nmid2N}c_{p}^{0}[\lambda_{p}]} \text{ }\text{ }\text{and}\text{ }\text{ } \displaystyle{\mathbf{c_{2}}=c_{2}'[\alpha_{2}']\prod_{p|N}c_{p}^{s}[\lambda_{p}']\prod_{p\nmid2N}c_{p}^{0}[\lambda_{p}]}.$$
If $\alpha_{2}=\alpha_{2}'$, the set $U(4N,F)$ consists of $$\displaystyle{\mathbf{c_{1}}=c_{2}'[\alpha_{2}]\prod_{p|N}c_{p}^{s}[\lambda_{p}']\prod_{p\nmid2N}c_{p}^{0}[\lambda_{p}]} \text{ }\text{ }\text{and}\text{ }\text{ } \displaystyle{\mathbf{c'_{2}}=c_{2}''[\alpha_{2}]\prod_{p|N}c_{p}^{s}[\lambda_{p}']\prod_{p\nmid2N}c_{p}^{0}[\lambda_{p}]}.$$ Theorem ~\ref{waldspurger2} gives a basis for $S_{\frac{k}{2}}(\Gamma_{0}(4N),F)$ that consists of the functions $$f(\mathbf{c_{1}},A_{F})=\sum_{n\geq1}a_{n}(\mathbf{c_{1}},A_{F})q^{n}\text{ and }f(\mathbf{c_{2}},A_{F})=\sum_{n\geq1}a_{n}(\mathbf{c_{2}},A_{F})q^{n}\text{ if }\alpha_{2}\neq\alpha_{2}'$$ and $$f(\mathbf{c_{1}},A_{F})=\sum_{n\geq1}a_{n}(\mathbf{c_{1}},A_{F})q^{n}\text{ and }f(\mathbf{c'_{2}},A_{F})=\sum_{n\geq1}a_{n}(\mathbf{c'_{2}},A_{F})q^{n}\text{ if }\alpha_{2}=\alpha_{2}'.$$ Recall that $$a_{n}(\mathbf{c_{1}},A_{F})=A_{F}(n^{\mathrm{sf}})n^{\frac{k-2}{4}}c_{2}'[\alpha_{2}](n)\prod_{p|N}c_{p}^{s}[\lambda_{p}'](n)\prod_{p\nmid2N}c_{p}^{0}[\lambda_{p}](n)$$ $$a_{n}(\mathbf{c_{2}},A_{F})=A_{F}(n^{\mathrm{sf}})n^{\frac{k-2}{4}}c_{2}'[\alpha_{2}'](n)\prod_{p|N}c_{p}^{s}[\lambda_{p}'](n)\prod_{p\nmid2N}c_{p}^{0}[\lambda_{p}](n)$$ and $$a_{n}(\mathbf{c'_{2}},A_{F})=A_{F}(n^{\mathrm{sf}})n^{\frac{k-2}{4}}c_{2}''[\alpha_{2}](n)\prod_{p|N}c_{p}^{s}[\lambda_{p}'](n)\prod_{p\nmid2N}c_{p}^{0}[\lambda_{p}](n).$$

\begin{lemma}
 For every positive integer $n$ such that $(-1)^{\frac{k-1}{2}}n=-n\equiv2,3\mod{4}$, we have $a_{n}(\mathbf{c_{1}},A_{F})=a_{n}(\mathbf{c_{2}},A_{F})$ and $a_{n}(\mathbf{c'_{2}},A_{F})=0$.
\end{lemma}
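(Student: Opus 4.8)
The plan is to reduce both assertions to the single local factor at the prime $2$ and then to read off the values of $c_2'$ and $c_2''$ from Waldspurger's piecewise formulas. First I would record that, since $k\equiv3\bmod4$, the exponent $\frac{k-1}{2}$ is odd, so $(-1)^{\frac{k-1}{2}}=-1$; consequently the hypothesis $-n\equiv2,3\bmod4$ is equivalent to $n\equiv1,2\bmod4$. Comparing the three displayed expressions for $a_n(\mathbf{c_1},A_F)$, $a_n(\mathbf{c_2},A_F)$ and $a_n(\mathbf{c'_2},A_F)$, one sees that they share the common factor $A_F(n^{\mathrm{sf}})n^{\frac{k-2}{4}}\prod_{p|N}c_p^s[\lambda_p'](n)\prod_{p\nmid2N}c_p^0[\lambda_p](n)$ and differ only in the $2$-adic contribution, namely $c_2'[\alpha_2](n)$, $c_2'[\alpha_2'](n)$ and $c_2''[\alpha_2](n)$ respectively. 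Hence it suffices to prove that $c_2'[\alpha_2](n)=c_2'[\alpha_2'](n)$ and $c_2''[\alpha_2](n)=0$ whenever $n\equiv1,2\bmod4$.

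Next I would write $n=u\cdot2^{2h}$ with $v_2(u)\in\{0,1\}$, as required before applying the local formulas, and observe that the congruence pins down $h=0$ in both cases: if $n\equiv2\bmod4$ then $v_2(n)=1$, which forces $v_2(u)=1$ and $h=0$; if $n\equiv1\bmod4$ then $n$ is odd, so $u=n$, $v_2(u)=0$ and again $h=0$. The one genuine computation is the $2$-adic Hilbert symbol $(u,-1)_2$ in the odd case: applying the formula $(a,b)_2=(-1)^{\epsilon(s)\epsilon(t)+\alpha\omega(t)+\beta\omega(s)}$ with $a=n$ and $b=-1$ gives $(n,-1)_2=(-1)^{\frac{n-1}{2}}$, which equals $+1$ precisely when $n\equiv1\bmod4$. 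This is the step that links the arithmetic of $n$ to the branch selection in the piecewise definitions, and it is the only place where the congruence $n\equiv1\bmod4$ (as opposed to $n\equiv3$) is actually used.

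With $h=0$ and these symbol values in hand, the conclusion is immediate. For $c_2'[\delta](n)$: when $n\equiv2\bmod4$ we have $v_2(u)=1$, and when $n\equiv1\bmod4$ we have $v_2(u)=0$ but $(u,-1)_2=1\neq-1$, so in either case the first branch of the definition of $c_2'$ is not triggered and $c_2'[\delta](n)=\delta^h=\delta^0=1$ independently of $\delta$; taking $\delta=\alpha_2$ and $\delta=\alpha_2'$ yields $c_2'[\alpha_2](n)=c_2'[\alpha_2'](n)=1$, hence $a_n(\mathbf{c_1},A_F)=a_n(\mathbf{c_2},A_F)$. For $c_2''[\delta](n)$: when $n\equiv2\bmod4$ we are in the case $v_2(n)=1$, and when $n\equiv1\bmod4$ we are in the case $v_2(n)=0$ with $(n,-1)_2=1$, so both fall into the middle branch of the definition of $c_2''$, giving $c_2''[\alpha_2](n)=0$ and therefore $a_n(\mathbf{c'_2},A_F)=0$. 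The argument is entirely mechanical once the reduction to the prime $2$ is made; the only point requiring real care is the correct evaluation of the Hilbert symbol $(n,-1)_2$ and the matching of each congruence class of $n$ to the appropriate branch of Waldspurger's local functions.
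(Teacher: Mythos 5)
Your proposal is correct and follows essentially the same route as the paper's proof: reduce to the local factor at $2$, write $n=u2^{2h}$, observe that $h=0$ and check which branch of $c_2'$ and $c_2''$ applies in each congruence class. The only difference is that you spell out the evaluation of the Hilbert symbol $(n,-1)_2=(-1)^{\frac{n-1}{2}}$, which the paper's proof asserts without computation.
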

\begin{proof}
Write $n$ as $u2^{2h}$ with $v_{2}(u)\in\{0,1\}$.
If $n\equiv1\mod{4}$, we get $c_{2}'[\alpha_{2}](n)=(\alpha_{2})^{0}=1=(\alpha_{2}')^{0}=c_{2}'[\alpha_{2}'](n)$ and $c_{2}''[\alpha_{2}](n)=0$ since $h=0$, $v_{2}(u)=0$ and $(u,-1)_{2}=1$. If $n\equiv2\mod{4}$, we also get $c_{2}'[\alpha_{2}](n)=(\alpha_{2})^{0}=1=(\alpha_{2}')^{0}=c_{2}'[\alpha_{2}'](n)$ and $c_{2}''[\alpha_{2}](n)=0$ since $h=0$ and $v_{2}(u)=1$. Thus, $a_{n}(\mathbf{c_{1}},A_{F})=a_{n}(\mathbf{c_{2}},A_{F})$ and $a_{n}(\mathbf{c'_{2}},A_{F})=0$ for all $n\equiv1,2\mod{4}$.
\end{proof}
\begin{lemma}
 If $\alpha_{2}\neq\alpha_{2}'$, the function $f(\mathbf{c_{1}},A_{F})-f(\mathbf{c_{2}},A_{F})$ belongs to the Kohnen subspace of $S_{\frac{k}{2}}(\Gamma_{0}(4N),F)$. Otherwise, the function $f(\mathbf{c'_{2}},A_{F})$ belongs to the Kohnen subspace of $S_{\frac{k}{2}}(\Gamma_{0}(4N),F)$
 \end{lemma}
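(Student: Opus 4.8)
The plan is to reduce the entire statement to the preceding lemma by carefully unpacking the definition of the Kohnen subspace in our specialized setting. Recall from the introduction that the Kohnen subspace of $S'_{\frac{k}{2}}(M,\boldsymbol{\chi})$ consists of those $f=\sum_{n\geq1}a_{n}q^{n}$ with $a_{n}=0$ whenever $\boldsymbol{\chi}_{(2)}(-1)(-1)^{\frac{k-1}{2}}n\equiv2,3\mod{4}$. In the present situation $\boldsymbol{\chi}$ is trivial, so $\boldsymbol{\chi}_{(2)}(-1)=1$, and since $k\equiv3\mod{4}$ the exponent $\frac{k-1}{2}$ is odd, whence $(-1)^{\frac{k-1}{2}}=-1$. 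Therefore the Kohnen condition specializes to $a_{n}=0$ whenever $-n\equiv2,3\mod{4}$, which is \emph{precisely} the range of indices treated in the preceding lemma.

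First I would observe that both candidate forms are legitimate elements of the ambient space. The three forms $f(\mathbf{c_{1}},A_{F})$, $f(\mathbf{c_{2}},A_{F})$, and $f(\mathbf{c'_{2}},A_{F})$ lie in $S_{\frac{k}{2}}(\Gamma_{0}(4N),F)$ by Theorem~\ref{waldspurger2}, and this space is a complex vector space, hence closed under linear combinations. Consequently $f(\mathbf{c_{1}},A_{F})-f(\mathbf{c_{2}},A_{F})$ and $f(\mathbf{c'_{2}},A_{F})$ also lie in $S_{\frac{k}{2}}(\Gamma_{0}(4N),F)$, so it is meaningful to test them against the Kohnen condition, and membership is decided entirely by their Fourier coefficients at the indices $n$ with $-n\equiv2,3\mod{4}$.

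It then remains to read off these coefficients from the preceding lemma. The $n$-th Fourier coefficient of $f(\mathbf{c_{1}},A_{F})-f(\mathbf{c_{2}},A_{F})$ is $a_{n}(\mathbf{c_{1}},A_{F})-a_{n}(\mathbf{c_{2}},A_{F})$; by the preceding lemma this difference vanishes for every $n$ with $-n\equiv2,3\mod{4}$, so when $\alpha_{2}\neq\alpha_{2}'$ the form $f(\mathbf{c_{1}},A_{F})-f(\mathbf{c_{2}},A_{F})$ satisfies the Kohnen condition. Likewise, the $n$-th Fourier coefficient of $f(\mathbf{c'_{2}},A_{F})$ is $a_{n}(\mathbf{c'_{2}},A_{F})$, which the preceding lemma shows equals $0$ for all such $n$; hence when $\alpha_{2}=\alpha_{2}'$ the form $f(\mathbf{c'_{2}},A_{F})$ lies in the Kohnen subspace. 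This exhausts the two cases of the statement.

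The hard part has in fact already been done in the preceding lemma, whose proof turned on the explicit $2$-adic formulae for $c_{2}'[\delta]$ and $c_{2}''[\delta]$; so there is essentially no obstacle here beyond bookkeeping. The one point genuinely requiring care is the correct evaluation of the Kohnen sign $\boldsymbol{\chi}_{(2)}(-1)(-1)^{\frac{k-1}{2}}$: were $k$ congruent to $1$ rather than $3$ modulo $4$, the relevant congruence classes of $n$ would shift and the alignment with the preceding lemma would break down. Since our standing hypothesis is $k\equiv3\mod{4}$ together with $\boldsymbol{\chi}$ trivial, this specialization produces exactly the congruence $-n\equiv2,3\mod{4}$ needed to invoke the preceding lemma, and the result follows.
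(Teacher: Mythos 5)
Your proof is correct and takes essentially the same approach as the paper: the paper's own proof is a one-line appeal to the preceding lemma, noting that $a_{n}(\mathbf{c_{1}},A_{F})-a_{n}(\mathbf{c_{2}},A_{F})$ and $a_{n}(\mathbf{c'_{2}},A_{F})$ vanish for all $n$ with $(-1)^{\frac{k-1}{2}}n\equiv2,3\mod{4}$. Your extra bookkeeping---specializing the Kohnen condition to trivial character and $k\equiv3\mod{4}$, and verifying that the linear combinations stay in $S_{\frac{k}{2}}(\Gamma_{0}(4N),F)$---is sound but does not change the argument.
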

 \begin{proof}
The coefficients $a_{n}(\mathbf{c_{1}},A_{F})-a_{n}(\mathbf{c_{2}},A_{F})$ and $a_{n}(\mathbf{c'_{2}},A_{F})$ are zero for all $n$ such that $(-1)^{\frac{k-1}{2}}n\equiv2,3\mod{4}.$
\end{proof}
\begin{proposition}
If $\alpha_{2}\neq\alpha_{2}'$, the half-integral weight modular form $g$ in Theorem $1.1$ is a non-zero scalar multiple of $f(\mathbf{c_{1}},A_{F})-f(\mathbf{c_{2}},A_{F})$. Otherwise, $g$ is a non-zero scalar multiple of $f(\mathbf{c'_{2}},A_{F})$.
\end{proposition}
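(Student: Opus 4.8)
The plan is to realize both $g$ and the candidate combination as non-zero vectors of one and the same one-dimensional space, so that proportionality with a non-zero scalar follows at once from a dimension count. First I would record the two global inputs. By Theorem~\ref{quadratictwist} (and, for general $k\equiv3\bmod 4$, by its higher-weight analogue via the B\"{o}cherer--Schulze-Pillot construction recalled at the close of Section~1), the form $g$ lies in the Kohnen subspace of $S_{\frac{k}{2}}(\Gamma_{0}(4N),F)$; and since we assume $L(F,\frac{1}{2})\neq0$, the same theorem guarantees $g\neq0$. On the other hand, Theorem~\ref{waldspurger2} gives $S_{\frac{k}{2}}(\Gamma_{0}(4N),F)=U(4N,F,A_{F})$, a two-dimensional space with basis $\{f(\mathbf{c_{1}},A_{F}),f(\mathbf{c_{2}},A_{F})\}$ if $\alpha_{2}\neq\alpha_{2}'$ and $\{f(\mathbf{c_{1}},A_{F}),f(\mathbf{c'_{2}},A_{F})\}$ if $\alpha_{2}=\alpha_{2}'$.

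Next I would check that the candidate element is a non-zero member of the Kohnen subspace. By the second of the two preceding lemmas, $f(\mathbf{c_{1}},A_{F})-f(\mathbf{c_{2}},A_{F})$ (in the case $\alpha_{2}\neq\alpha_{2}'$) and $f(\mathbf{c'_{2}},A_{F})$ (in the case $\alpha_{2}=\alpha_{2}'$) each lie in the Kohnen subspace of $S_{\frac{k}{2}}(\Gamma_{0}(4N),F)$. Each is non-zero: in the first case because $f(\mathbf{c_{1}},A_{F})$ and $f(\mathbf{c_{2}},A_{F})$ are distinct basis elements and so cannot coincide, and in the second because $f(\mathbf{c'_{2}},A_{F})$ is itself a basis vector. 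I would then invoke the representation-theoretic input recorded in Section~2, specialized to $S=\phi$ (so $N'=1$): the intersection $\tilde{A}^{+}_{\frac{k}{2}}(4N,\chi_{0})\cap V_{\tilde{\pi}^{D}}$ of the image of the Kohnen subspace with $S_{\frac{k}{2}}(\Gamma_{0}(4N),F)$ is one-dimensional by the theorem of Baruch--Mao. Since $g$ and the candidate element are both non-zero vectors of this one-dimensional space, they are scalar multiples of one another with a non-zero scalar, which is exactly the assertion.

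The crux, and the step I would flag as the main obstacle to a fully self-contained argument, is the one-dimensionality of the Kohnen part. Without it one can only write $g=\lambda_{1}f(\mathbf{c_{1}},A_{F})+\lambda_{2}f(\mathbf{c_{2}},A_{F})$ and, using $a_{n}(\mathbf{c_{1}},A_{F})=a_{n}(\mathbf{c_{2}},A_{F})$ for $n\equiv1,2\bmod 4$ from the first preceding lemma, deduce $(\lambda_{1}+\lambda_{2})\,a_{n}(\mathbf{c_{1}},A_{F})=0$ for all such $n$. Forcing $\lambda_{1}+\lambda_{2}=0$ then requires exhibiting a single index $n\equiv1,2\bmod 4$ with $a_{n}(\mathbf{c_{1}},A_{F})\neq0$, equivalently $f(\mathbf{c_{1}},A_{F})$ not in the Kohnen subspace. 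The explicit local formulae show this forces $v_{p}(n)$ odd at every $p\mid N$ together with a non-vanishing twisted central value $A_{F}(n^{\mathrm{sf}})\neq0$, so a direct proof would rest on a non-vanishing theorem for quadratic twists of the $L$-function of $F$; citing the one-dimensionality from Section~2 is what sidesteps this difficulty.
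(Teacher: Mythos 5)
Your proof is correct, and its skeleton is the same as the paper's: everything reduces to the one-dimensionality of the Kohnen part of $S_{\frac{k}{2}}(\Gamma_{0}(4N),F)$, in which both $g$ (non-zero by Theorem~\ref{quadratictwist} and the hypothesis $L(F,\frac{1}{2})\neq0$) and the candidate combination (by the preceding lemma) live. The one genuine divergence is the authority you cite for that one-dimensionality: the paper quotes Kohnen's multiplicity one theorem (Theorem 2 in \cite{kohnen}), which applies verbatim in this setting (trivial character, level $4N$ with $N$ odd and square-free) and keeps the argument classical and short, whereas you invoke the representation-theoretic statement recalled in Section 2 --- that $\tilde{A}^{+}_{\frac{k}{2}}(4NN',\chi_{0})\cap V_{\tilde{\pi}^{D}}$ is one-dimensional, specialized to $S=\phi$, $N'=1$ --- which that section attributes to Baruch--Mao \cite{BaMa}. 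This is legitimate, but it is the heavier route: Section 2 is an expository digression rather than a developed tool of the paper, so nothing is gained over Kohnen's theorem. Two smaller remarks. You are right that one must also know the candidate is non-zero, a point the paper's proof passes over in silence; but your justification (distinct basis vectors cannot have zero difference) presupposes the linear independence of $f(\mathbf{c_{1}},A_{F})$ and $f(\mathbf{c_{2}},A_{F})$, which the paper asserts when invoking Theorem~\ref{waldspurger2} yet only verifies \emph{after} this proposition, via the table of Fourier-coefficient ratios; to be airtight you should point to that verification or to the independence built into Waldspurger's theorem. Finally, in your closing diagnosis, non-vanishing of $a_{n}(\mathbf{c_{1}},A_{F})$ at some $n\equiv1,2\bmod4$ does not force $v_{p}(n)$ to be odd at every $p\mid N$: writing $n=up^{2h}$, the factor $c_{p}^{s}[\lambda_{p}'](n)$ is also non-zero when $v_{p}(u)=0$ and $\left(\frac{-u}{p}\right)=-p^{\frac{1}{2}}\lambda_{p}'$. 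This imprecision sits only in your side discussion and does not affect the main argument.
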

\begin{proof}
We know by the multiplicity one result of Kohnen $($Theorem 2 in \cite{kohnen}$)$ that the Kohnen subspace of $S_{\frac{k}{2}}(\Gamma_{0}(4N),F)$ is one-dimensional. We also know from Section 1 that $g$ belongs to the Kohnen subspace. This forces $g$ to be equal to a non-zero scalar multiple of $f(\mathbf{c_{1}},A_{F})-f(\mathbf{c_{2}},A_{F})$ when $\alpha_{2}\neq\alpha_{2}'$ and a non-zero scalar multiple of $f(\mathbf{c'_{2}},A_{F})$ when $\alpha_{2}=\alpha_{2}'$.
\end{proof}

Let $t$ be a positive square-free integer and denote by $\Delta_{-t}$ the fundamental discriminant corresponding to $-t$. Recall that $g=\displaystyle{\sum_{n\geq1}a_{n}q^{n}}$. We know by the previous proposition that $$a_{|\Delta_{-t}|}(\mathbf{c_{1}},A_{F})-a_{|\Delta_{-t}|}(\mathbf{c_{2}},A_{F})=r_{1}a_{|\Delta_{-t}|} \text{ }\text{ }\text{ }\text{  if }\alpha_{2}\neq\alpha_{2}'$$ and 
$$a_{|\Delta_{-t}|}(\mathbf{c'_{2}},A_{F})=r_{2}a_{|\Delta_{-t}|} \text{ }\text{ }\text{ } \text{   if }\alpha_{2}=\alpha_{2}'$$ for some non-zero complex constants $r_{1}$ and $r_{2}$ such that $f(\mathbf{c_{1}},A_{F})-f(\mathbf{c_{2}},A_{F})=r_{1}g$ if $\alpha_{2}\neq\alpha_{2}'$ and $f(\mathbf{c'_{2}},A_{F})=r_{2}g$ if $\alpha_{2}=\alpha_{2}'$. On the other hand, we also know that $$a_{|\Delta_{-t}|}(\mathbf{c_{1}},A_{F})-a_{|\Delta_{-t}|}(\mathbf{c_{2}},A_{F})=A_{F}(t)|\Delta_{-t}|^{\frac{k-2}{4}}\left(c_{2}'[\alpha_{2}](|\Delta_{-t}|)-c_{2}'[\alpha_{2}'](|\Delta_{-t}|)\right)\prod_{p|N}c_{p}^{s}[\lambda_{p}'](|\Delta_{-t}|)$$ and $$a_{|\Delta_{-t}|}(\mathbf{c'_{2}},A_{F})=A_{F}(t)|\Delta_{-t}|^{\frac{k-2}{4}}c_{2}''[\alpha_{2}](|\Delta_{-t}|)\prod_{p|N}c_{p}^{s}[\lambda_{p}'](|\Delta_{-t}|).$$ Notice that the local factors for $\displaystyle{p\nmid2N}$ do not appear in the above expressions. In fact, $c_{p}^{0}[\lambda_{p}](|\Delta_{-t}|)=1$ for all such $p$ since $|\Delta_{-t}|$ is not divisible by the square of any odd prime. 

Hence,
$$A_{F}(t)|\Delta_{-t}|^{\frac{k-2}{4}}\left(c_{2}'[\alpha_{2}](|\Delta_{-t}|)-c_{2}'[\alpha_{2}'](|\Delta_{-t}|)\right)\prod_{p|N}c_{p}^{s}[\lambda_{p}'](|\Delta_{-t}|)=r_{1}a_{|\Delta_{-t}|}\text{ }\text{ }\text{ }\text{if }\alpha_{2}\neq\alpha_{2}',$$ and $$A_{F}(t)|\Delta_{-t}|^{\frac{k-2}{4}}c''_{2}[\alpha_{2}](|\Delta_{-t}|)\prod_{p|N}c_{p}^{s}[\lambda_{p}'](|\Delta_{-t}|)=r_{2}a_{|\Delta_{-t}|}\text{ }\text{ }\text{ }\text{if }\alpha_{2}=\alpha_{2}'.$$ Since $\Delta_{-t}$ is a fundamental discriminant, it is not divisible by any square of any odd prime, and it satisfies $\Delta_{-t}\equiv1\mod{4}$ or $\Delta_{-t}\equiv8,12\mod{16}$. Thus, $$c_{2}'[\alpha_{2}](|\Delta_{-t}|)-c_{2}'[\alpha_{2}'](|\Delta_{-t}|)=\alpha_{2}-\alpha_{2}'$$ and $$c''_{2}[\alpha_{2}](|\Delta_{-t}|)=\alpha_{2}.$$We still need to investigate the value of $\prod_{p|N}c_{p}^{s}[\lambda_{p}'](|\Delta_{-t}|)$. Since $|\Delta_{-t}|$ is not divisible by the square of any odd prime $c_{p}^{s}[\lambda_{p}'](|\Delta_{-t}|)=1$ for all $p$ such that $p|N$ and $p|\Delta_{-t}$. If $p|N$ and $p\nmid\Delta_{-t}$ then $$c_{p}^{s}[\lambda_{p}'](|\Delta_{-t}|)=\begin{cases}
2^{\frac{1}{2}} & \text{if }\left(\frac{\Delta_{-t}}{p}\right)=-p^{\frac{1}{2}}\lambda_{p}';\\
0 & \text{otherwise.}
\end{cases}$$ 
Therefore,  $$\prod_{p|N}c_{p}^{s}[\lambda_{p}'](|\Delta_{-t}|)=\begin{cases}
\displaystyle{\prod_{\substack{p|N\\p\nmid\Delta_{-t}}}2^{\frac{1}{2}}} & \text{if }\left(\frac{\Delta_{-t}}{p}\right)=-p^{\frac{1}{2}}\lambda_{p}'\text{ }\forall p,\text{ }p|N \text{ and }p\nmid\Delta_{-t};\\
0 & \text{otherwise.}
\end{cases}$$ 
Putting this together gives $$A_{F}(t)|\Delta_{-t}|^{\frac{k-2}{4}}(\alpha_{2}-\alpha_{2}')\prod_{\substack{p|N\\p\nmid\Delta_{-t}}}2^{\frac{1}{2}}=r_{1}a_{|\Delta_{-t}|}\text{ }\text{ }\text{ if }\alpha_{2}\neq\alpha_{2}'$$ and $$A_{F}(t)|\Delta_{-t}|^{\frac{k-2}{4}}\alpha_{2}\prod_{\substack{p|N\\p\nmid\Delta_{-t}}}2^{\frac{1}{2}}=r_{2}a_{|\Delta_{-t}|}\text{ }\text{ }\text{ if }\alpha_{2}=\alpha_{2}'$$whenever $\left(\frac{\Delta_{-t}}{p}\right)=-p^{\frac{1}{2}}\lambda_{p}'$ for all $p$ such that $p|N$ and $p\nmid\Delta_{-t}$. Hence we obtain the following proposition.
\begin{theorem}\label{prop}
For a positive square-free integer $t$ satisfying $\left(\frac{\Delta_{-t}}{p}\right)=-p^{\frac{1}{2}}\lambda_{p}'$ for all $p$ such that $p|N$ and $p\nmid\Delta_{-t}$, we have $$A_{F}(t)=r2^{\frac{\nu_{t}}{2}}|\Delta_{-t}|^{\frac{2-k}{4}}a_{|\Delta_{-t}|},$$ where $r$ is a non-zero complex constant depending only on $F$, and $\displaystyle{2^{\frac{\nu_{t}}{2}}=\prod_{\substack{p|N\\p\nmid\Delta_{-t}}}2^{-\frac{1}{2}}}.$
\end{theorem}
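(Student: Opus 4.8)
The plan is to read the identity straight off the two displayed relations established just above the statement, namely $f(\mathbf{c_1},A_F)-f(\mathbf{c_2},A_F)=r_1 g$ when $\alpha_2\neq\alpha_2'$ and $f(\mathbf{c_2'},A_F)=r_2 g$ when $\alpha_2=\alpha_2'$, where $r_1,r_2$ are the nonzero constants supplied by Kohnen's multiplicity-one theorem (the preceding Proposition). Writing $g=\sum_n a_n q^n$, I would compare the coefficient of $q^{|\Delta_{-t}|}$ on both sides: on the $g$-side it is $r_1 a_{|\Delta_{-t}|}$ (resp. $r_2 a_{|\Delta_{-t}|}$), while on the other side it is the matching combination of the coefficients $a_{|\Delta_{-t}|}(\mathbf{c_1},A_F)$, $a_{|\Delta_{-t}|}(\mathbf{c_2},A_F)$, $a_{|\Delta_{-t}|}(\mathbf{c_2'},A_F)$ expanded through the Waldspurger formula $a_n(\mathbf{c},A_F)=A_F(n^{\mathrm{sf}})n^{(k-2)/4}\prod_p c_p(n)$. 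The entire proof then reduces to evaluating that product at $n=|\Delta_{-t}|$ and solving for $A_F(t)$.

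The evaluation separates into three families of primes. Since $t$ is square-free and $|\Delta_{-t}|$ equals $t$ or $4t$, its square-free part is $t$, so the global factor is $A_F(t)$. For $p\nmid 2N$ one writes $|\Delta_{-t}|=up^{2h}$; a negative fundamental discriminant is divisible by no square of an odd prime, so $h=0$, and the formula~\eqref{cp0} gives $c_p^0[\lambda_p](|\Delta_{-t}|)=1$. Thus these primes contribute nothing, and only the factors at $2$ and at $p\mid N$ survive.

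At $p=2$ I would split according to the only possibilities for a negative fundamental discriminant, $\Delta_{-t}\equiv1\pmod4$ or $\Delta_{-t}\equiv8,12\pmod{16}$, and in each class feed the appropriate values of $h$ and of $(u,-1)_2$ into the definitions of $c_2'$ and $c_2''$; this yields $c_2'[\alpha_2]-c_2'[\alpha_2']=\alpha_2-\alpha_2'$ and $c_2''[\alpha_2]=\alpha_2$, the latter needing one unfolding of the recursive third branch of $c_2''$ in the cases where $4\mid|\Delta_{-t}|$. For $p\mid N$, again $h=0$: if $p\mid\Delta_{-t}$ then $v_p(u)=1$ and $c_p^s[\lambda_p']=1$, while if $p\nmid\Delta_{-t}$ then $v_p(u)=0$ and $-u=\Delta_{-t}$, so that $c_p^s[\lambda_p'](|\Delta_{-t}|)$ equals $2^{1/2}$ exactly when $\left(\frac{\Delta_{-t}}{p}\right)=-p^{1/2}\lambda_p'$ and vanishes otherwise. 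This is precisely where the hypothesis on $t$ is used: it forces every such factor to equal $2^{1/2}$, giving $\prod_{p|N}c_p^s[\lambda_p'](|\Delta_{-t}|)=\prod_{p|N,\,p\nmid\Delta_{-t}}2^{1/2}$.

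Substituting, the surviving coefficient becomes $A_F(t)\,|\Delta_{-t}|^{(k-2)/4}(\alpha_2-\alpha_2')\prod_{p|N,\,p\nmid\Delta_{-t}}2^{1/2}$ (resp. the same with $\alpha_2-\alpha_2'$ replaced by $\alpha_2$); equating with $r_1 a_{|\Delta_{-t}|}$ (resp. $r_2 a_{|\Delta_{-t}|}$) and solving gives the claim with $r=r_1/(\alpha_2-\alpha_2')$ (resp. $r=r_2/\alpha_2$), both nonzero and depending only on $F$, while $\prod 2^{1/2}$ inverts to $\prod 2^{-1/2}=2^{\nu_t/2}$. The one genuinely delicate step is the $2$-adic case analysis, in particular the recursive branch of $c_2''$ and the careful tracking of $(u,-1)_2$ through each residue class modulo $16$; everything else is bookkeeping that has in effect already been carried out in the paragraphs immediately preceding the statement, so the formal proof is short.
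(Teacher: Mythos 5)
Your proposal is correct and follows essentially the same route as the paper: equating the coefficient of $q^{|\Delta_{-t}|}$ in $r_1 g = f(\mathbf{c_1},A_F)-f(\mathbf{c_2},A_F)$ (resp.\ $r_2 g = f(\mathbf{c_2'},A_F)$), showing the factors $c_p^0$ for $p\nmid 2N$ are trivial, evaluating $c_2'[\alpha_2]-c_2'[\alpha_2']=\alpha_2-\alpha_2'$ and $c_2''[\alpha_2]=\alpha_2$ via the fundamental-discriminant case analysis, using the hypothesis on $t$ to pin $\prod_{p\mid N}c_p^s[\lambda_p'](|\Delta_{-t}|)=\prod_{p\mid N,\,p\nmid\Delta_{-t}}2^{1/2}$, and solving for $A_F(t)$ with $r=r_1/(\alpha_2-\alpha_2')$ or $r_2/\alpha_2$. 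This is exactly the computation the paper carries out in the paragraphs preceding the theorem, so no gap remains.
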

\begin{remark}
Let $n$ be a positive integer such that its square-free part which we denote by $n^{\mathrm{sf}}$ satisfies $\left(\frac{\Delta_{-n^{\mathrm{sf}}}}{p}\right)=p^{\frac{1}{2}}\lambda_{p}'$ for some prime $p$ such that $p|N$ and $p\nmid\Delta_{-n^{\mathrm{sf}}}$. For this particular $p$, write $n=up^{2h}$ ($v_{p}(u)\in\{0,1\}$). Since $p\nmid\Delta_{-n^{\mathrm{sf}}}$, we get $v_{p}(u)=0$. Moreover, $\left(\frac{-u}{p}\right)=\left(\frac{\Delta_{-n^{\mathrm{sf}}}}{p}\right)=p^{\frac{1}{2}}\lambda_{p}'$. This forces $c_{p}^{s}[\lambda_{p}']$ to vanish at $n$. Therefore, $a_{n}(\mathbf{c_{1}},A_{F})$, $a_{n}(\mathbf{c_{2}},A_{F})$, $a_{n}(\mathbf{c'_{2}},A_{F})$ vanish for all such $n$ regardless of the value of $A_{F}(n^{\mathrm{sf}})$. 
\end{remark}
Before stating our main theorem, we shall verify that $f(\mathbf{c_{1}},A_{F})$ and $f(\mathbf{c_{2}},A_{F})$ are not scalar multiples of each other even though they share the same Hecke eigenvalues at the primes $p\nmid N$. We assume that $\alpha_{2}\neq \alpha'_{2}$ purely for simplicity of exposition. Let $n$ be a positive integer such that $a_{n}(\mathbf{c_{1}},A_{F})\neq0$. In particular, $A_{F}(n^{\mathrm{sf}})\neq0$ and $c_{p}^{s}[\lambda'_{p}](n)\neq0$. We set $r=|\Delta_{-n^{\mathrm{sf}}}|$ and $w=4|\Delta_{-n^{\mathrm{sf}}}|$ to simplify notation. Hence, \begin{equation}\label{cps}\prod_{p\mid N}c_{p}^{s}[\lambda'_{p}](r)=\prod_{p\mid N}c_{p}^{s}[\lambda'_{p}](w)= \prod_{\substack{p|N\\p\nmid r}}2^{\frac{1}{2}}.\end{equation} Moreover, for $\delta\in\{\alpha_{2},\alpha'_{2}\}$ we get \begin{equation}\label{c2}c'_{2}[\delta](r)=\begin{cases}
\delta-2^{-\frac{1}{2}}(2,r)_{2} & \text{if }r\equiv3\mod4;\\
\delta & \text{otherwise,}\\
\end{cases}\end{equation} and
\begin{equation}\label{c2'}c'_{2}[\delta](w)=\begin{cases}
\delta(\delta-2^{-\frac{1}{2}}(2,r)_{2}) & \text{ if }r\equiv3\mod4;\\
\delta^{2} & \text{ otherwise.}\\
\end{cases}\end{equation}Notice that the coefficients of $q^{r}$ and $q^{w}$ in $f(\mathbf{c_{1}},A_{F})$ and $f(\mathbf{c_{2}},A_{F})$ are non-zero. The following table shows that these coefficients are not in a constant ratio to each other. In fact, $\frac{a_{w}(\mathbf{c_{1}},A_{F})}{a_{w}(\mathbf{c_{2}},A_{F})}= \frac{\alpha_{2}}{\alpha'_{2}}\left(\frac{a_{r}(\mathbf{c_{1}},A_{F})}{a_{r}(\mathbf{c_{2}},A_{F})}\right)$.
\begin{table}[ht]
\caption{Ratios of Fourier Coefficients}
\centering
\begin{tabular}{|c|c|c|}
\hline
$$ & $r\equiv3\mod4$ & $r\equiv4,8\mod16$ \\
\hline
$\frac{a_{r}(\mathbf{c_{1}},A_{F})}{a_{r}(\mathbf{c_{2}},A_{F})}$ &  $\frac{\alpha_{2}-2^{-\frac{1}{2}}(2,r)_{2}}{\alpha'_{2}-2^{-\frac{1}{2}}(2,r)_{2}}$ & $ \frac{\alpha_{2}}{\alpha'_{2}}$ \\
\hline
$\frac{a_{w}(\mathbf{c_{1}},A_{F})}{a_{w}(\mathbf{c_{2}},A_{F})}$ & $\frac{\alpha_{2}(\alpha_{2}-2^{-\frac{1}{2}}(2,r)_{2})}{\alpha'_{2}(\alpha'_{2}-2^{-\frac{1}{2}}(2,r)_{2})}$  & $(\frac{\alpha_{2}}{\alpha'_{2}})^{2}$\\
\hline
\end{tabular}
\end{table}\\

In what follows, the terms $K_{1}(n)$, $K_{2}(n)$ and $K'_{2}(n)$ refer to the formulae 
\begin{equation}\label{K1}K_{1}(n)=c'_{2}[\alpha_{2}](n)\prod_{p|N}c_{p}^{s}[\lambda_{p}'](n)\prod_{p\nmid2N}c_{p}^{0}[\lambda_{p}](n)\end{equation}  \begin{equation}\label{K2}K_{2}(n)=c'_{2}[\alpha_{2}'](n)\prod_{p|N}c_{p}^{s}[\lambda_{p}'](n)\prod_{p\nmid2N}c_{p}^{0}[\lambda_{p}](n)\end{equation} 
and \begin{equation}\label{K3}K'_{2}(n)=c''_{2}[\alpha_{2}](n)\prod_{p|N}c_{p}^{s}[\lambda_{p}'](n)\prod_{p\nmid2N}c_{p}^{0}[\lambda_{p}](n)\end{equation} 
\begin{theorem}\label{main}
Let $\displaystyle{F=\sum_{n\geq1}b_{n}q^{n}}$ be a newform in $S_{k-1}^{\mathrm{new}}(\Gamma_{0}(N))$ with odd square-free level $N$ such that $k\equiv3\mod{4}$ and $L(F,\frac{1}{2})\neq0$. Let $\displaystyle{g=\sum_{n\geq1}a_{n}q^{n}}\in S_{\frac{k}{2}}(\Gamma_{0}(4N),F)$ be the form obtained in Section 1. Put $\displaystyle{f_{1}=\sum_{n\geq1}a_{n}(f_{1})q^{n}}$, $\displaystyle{f_{2}=\sum_{n\geq1}a_{n}(f_{2})q^{n}}$ and $\displaystyle{f'_{2}=\sum_{n\geq1}a_{n}(f'_{2})q^{n}}$ with $$a_{n}(f_{1})=\begin{cases} 2^{\frac{\nu_{n}}{2}}a_{|\Delta_{-n^{\mathrm{sf}}}|}|\Delta_{-n^{\mathrm{sf}}}|^{\frac{2-k}{4}}n^{\frac{k-2}{4}}K_{1}(n) & \text{if } \left(\frac{\Delta_{-n^{\mathrm{sf}}}}{p}\right)=-p^{\frac{1}{2}}\lambda_{p}'\text{ }\forall p,\text{ }p|N \text {and }p\nmid\Delta_{-n^{\mathrm{sf}}};\\ 0 & \text{otherwise,}\end{cases}$$ $$a_{n}(f_{2})=\begin{cases} 2^{\frac{\nu_{n}}{2}}a_{|\Delta_{-n^{\mathrm{sf}}}|}|\Delta_{-n^{\mathrm{sf}}}|^{\frac{2-k}{4}}n^{\frac{k-2}{4}}K_{2}(n) & \text{if } \left(\frac{\Delta_{-n^{\mathrm{sf}}}}{p}\right)=-p^{\frac{1}{2}}\lambda_{p}'\text{ }\forall p,\text{ }p|N \text {and }p\nmid\Delta_{-n^{\mathrm{sf}}};\\ 0 & \text{otherwise,}\end{cases}$$ and $$a_{n}(f'_{2})=\begin{cases} 2^{\frac{\nu_{n}}{2}}a_{|\Delta_{-n^{\mathrm{sf}}}|}|\Delta_{-n^{\mathrm{sf}}}|^{\frac{2-k}{4}}n^{\frac{k-2}{4}}K'_{2}(n) & \text{if } \left(\frac{\Delta_{-n^{\mathrm{sf}}}}{p}\right)=-p^{\frac{1}{2}}\lambda_{p}'\text{ }\forall p,\text{ }p|N \text {and }p\nmid\Delta_{-n^{\mathrm{sf}}};\\ 0 & \text{otherwise.}\end{cases}$$ Then $S_{\frac{k}{2}}(\Gamma_{0}(4N),F)$ is generated by $f_{1}$ and $f_{2}$ if $\alpha_{2}\neq\alpha'_{2}$, and it is generated by $f_{1}$ and $f'_{2}$ if $\alpha_{2}\neq\alpha'_{2}$. \end{theorem}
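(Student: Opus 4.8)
The plan is to produce the claimed basis by a direct substitution: we replace the global factor $A_F$ in the Waldspurger basis $\{f(\mathbf{c_1},A_F),f(\mathbf{c_2},A_F)\}$ (respectively $\{f(\mathbf{c_1},A_F),f(\mathbf{c'_2},A_F)\}$) by the explicit value furnished in Theorem~\ref{prop}, and then observe that the resulting forms $f_1,f_2$ (respectively $f_1,f'_2$) differ from the Waldspurger forms only by a single nonzero scalar $r$, so that they span the same space $S_{\frac{k}{2}}(\Gamma_0(4N),F)$. No new analytic input is required; everything has been prepared by Theorem~\ref{waldspurger2}, the explicit determination of $U(4N,F)$ and the local functions, and Theorem~\ref{prop}.

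First I would fix a positive integer $n$ and split into two cases according to whether $n^{\mathrm{sf}}$ satisfies the local condition $\left(\frac{\Delta_{-n^{\mathrm{sf}}}}{p}\right)=-p^{\frac{1}{2}}\lambda_p'$ for every $p\mid N$ with $p\nmid\Delta_{-n^{\mathrm{sf}}}$. When this condition holds, Theorem~\ref{prop} applies with $t=n^{\mathrm{sf}}$ and gives $A_F(n^{\mathrm{sf}})=r\,2^{\frac{\nu_n}{2}}|\Delta_{-n^{\mathrm{sf}}}|^{\frac{2-k}{4}}a_{|\Delta_{-n^{\mathrm{sf}}}|}$; inserting this into the Waldspurger coefficient $a_n(\mathbf{c_1},A_F)=A_F(n^{\mathrm{sf}})n^{\frac{k-2}{4}}K_1(n)$ yields exactly $r\cdot a_n(f_1)$, and identically $a_n(\mathbf{c_2},A_F)=r\cdot a_n(f_2)$ and $a_n(\mathbf{c'_2},A_F)=r\cdot a_n(f'_2)$, since $K_2$ and $K'_2$ carry precisely the local factors $c_2'[\alpha_2']$ and $c_2''[\alpha_2]$. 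When the condition fails, the Remark following Theorem~\ref{prop} shows that the local factor $\prod_{p\mid N}c_p^s[\lambda_p'](n)$ already vanishes, so $a_n(\mathbf{c_1},A_F)=a_n(\mathbf{c_2},A_F)=a_n(\mathbf{c'_2},A_F)=0$, which matches the ``otherwise'' branch in the definitions of $f_1,f_2,f'_2$; note that here the (possibly nonzero) value of $A_F(n^{\mathrm{sf}})$ is irrelevant precisely because it is multiplied by a vanishing local factor.

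Combining both cases coefficient-by-coefficient gives $f(\mathbf{c_1},A_F)=r f_1$, $f(\mathbf{c_2},A_F)=r f_2$, and $f(\mathbf{c'_2},A_F)=r f'_2$ with $r\neq0$. Since Theorem~\ref{waldspurger2} (together with $N(F)=4N$ and the computed set $U(4N,F)$) gives that $f(\mathbf{c_1},A_F),f(\mathbf{c_2},A_F)$ span $S_{\frac{k}{2}}(\Gamma_0(4N),F)$ when $\alpha_2\neq\alpha_2'$, and $f(\mathbf{c_1},A_F),f(\mathbf{c'_2},A_F)$ span it when $\alpha_2=\alpha_2'$, rescaling each generator by $1/r$ leaves the span unchanged, and each rescaled form is still nonzero; hence $\{f_1,f_2\}$ (respectively $\{f_1,f'_2\}$) is a basis.

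The step requiring the most care — rather than a true obstacle, as the argument is essentially bookkeeping — is to confirm that the two generators are genuinely linearly independent and not merely proportional. This is already settled by the ratio computation tabulated just before the statement, which exhibits indices $r=|\Delta_{-n^{\mathrm{sf}}}|$ and $w=4|\Delta_{-n^{\mathrm{sf}}}|$ at which $a_w(\mathbf{c_1},A_F)/a_w(\mathbf{c_2},A_F)=(\alpha_2/\alpha_2')\,a_r(\mathbf{c_1},A_F)/a_r(\mathbf{c_2},A_F)$, so the ratio of coefficients is non-constant. The remaining points of care are to verify that the quantity $\nu_n$ in the statement coincides with $\nu_{n^{\mathrm{sf}}}$ in Theorem~\ref{prop} (it does, since $\Delta_{-n^{\mathrm{sf}}}$, and hence $\prod_{p\mid N,\,p\nmid\Delta_{-n^{\mathrm{sf}}}}2^{-1/2}$, depends only on $n^{\mathrm{sf}}$), and that the scalar $r$ is the \emph{same} for all $n$, which holds because Theorem~\ref{prop} produces an $r$ depending only on $F$.
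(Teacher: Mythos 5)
Your proposal is correct and is essentially identical to the paper's own (implicit) proof: the paper obtains Theorem \ref{main} precisely by substituting the value of $A_{F}(n^{\mathrm{sf}})$ from Theorem \ref{prop} into the Waldspurger coefficients $a_{n}(\mathbf{c_{1}},A_{F})$, $a_{n}(\mathbf{c_{2}},A_{F})$, $a_{n}(\mathbf{c'_{2}},A_{F})$, using the Remark to dispose of the indices where the local condition fails, and invoking the ratio table to see that the two generators are not proportional. Your added checks (that $\nu_{n}=\nu_{n^{\mathrm{sf}}}$ and that the scalar $r$ is independent of $n$) are consistent with, and implicit in, the paper's argument.
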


In order to use Theorem $\ref{main}$ as an effective tool for computing a basis for $S_{\frac{3}{2}}(\Gamma_{0}(4N),F)$, we need to make the following observations. Once again, we assume that $\alpha_{2}\neq\alpha'_{2}$ to simplify the exposition and avoid unrewarding details. 

Let $h=2^{\frac{1}{2}}(f_{1}+f_{2})$ and write $\displaystyle{h=\sum_{n\geq1}a_{n}(h)q^{n}}$. First, we compute the Fourier coefficients $a_{n}(h)$ for positive square-free integers $n$ such that $\left(\frac{n}{p}\right)=-p^{\frac{1}{2}}\lambda_{p}'$ whenever $p|N$ and $p\nmid n$ $($since, for square-free $n$, $a_{n}(h)=0$ otherwise$)$. A straightforward calculation using Theorem $3.7$, $($\ref{K1}$)$, $($\ref{K2}$)$, $($\ref{K3}$)$, $($\ref{cp0}$)$, $($\ref{cps}$)$, $($\ref{c2}$)$, and $($\ref{c2'}$)$ shows that $$a_{n}(h)=\begin{cases} a_{n}(g)(b_{2}-2(2,n)_{2})& \text{if } n\equiv3\mod4;\\ 2a_{4n}(g) & \text{otherwise.}\end{cases}$$

Let us now consider a positive integer $n$, which is not divisible by any square prime to $4N$, and write $n=n^{\mathrm{sf}}y^{2}$. We also assume that $\left(\frac{n^{\mathrm{sf}}}{p}\right)=-p^{\frac{1}{2}}\lambda_{p}'$ whenever $p|N$ and $p\nmid n^{\mathrm{sf}}$. To simplify notation, we set $s=v_{2}(y)$. Another simple calculation shows that $$a_{n}(h)=\begin{cases} 2^{\frac{s+1}{2}}a_{n^{\mathrm{sf}}}(g)(\alpha_{2}^{s+1}+\alpha_{2}'^{s+1}-2^{-\frac{1}{2}}(2,n^{\mathrm{sf}})_{2}(\alpha_{2}^{s}+\alpha_{2}'^{s}))\prod_{p\mid N}b_{p}^{v_{p}(y)}& \text{if } n^{\mathrm{sf}}\equiv3\mod4;\\ 2^{\frac{s}{2}-1}a_{n^{\mathrm{sf}}}(h)(\alpha_{2}^{s}+\alpha_{2}'^{s})\prod_{p\mid N}b_{p}^{v_{p}(y)} & \text{otherwise.}\end{cases}$$

Finally, we use Proposition 14 on page 210 in \cite{koblitz} to compute $a_{n}(h)$ for an arbitrary positive integer $n$. Suppose that $n_{0}$ is a positive integer, which is not divisible by the square of any prime $p\nmid4N$. Then this proposition provides us with the following recursive formulae which hold for all $p\nmid 4N$ and $n_{1}$ prime to $p$: $$a_{n_{0}n_{1}^{2}p^{2}}(h)=a_{n_{0}n_{1}^{2}}(h)\left(b_{p}-\left(\frac{-n_{0}}{p}\right)\right)$$ and $$a_{n_{0}n_{1}^{2}p^{2(v+1)}}(h)=a_{n_{0}n_{1}^{2}p^{2v}}(h)\left(b_{p}-\left(\frac{-n_{0}}{p}\right)\right)-pa_{n_{0}n_{1}^{2}p^{2(v-1)}}(h)\text{ }\text{ }\text{ }v=1,2, ...$$

We implemented all of the above formulae in Sage. The result is a function that outputs the Fourier expansion of $h$ up to a desired precision. Thus, we get a basis for $S_{\frac{3}{2}}(\Gamma_{0}(4N),F)$ consisting of the modular forms $g$ and $h$.
\section{Examples}

We are interested in computing a basis for the space $S_{\frac{3}{2}}(\Gamma_{0}(60),F)$, where $F$ is the newform in $S_{2}^{\mathrm{new}}(\Gamma_{0}(15))$ corresponding to the elliptic curve $y^2 + xy + y = x^3 + x^2 - 10x - 10$. We know that 
 $$F =\sum_{n\geq1}b_{n}q^{n}=q - q^{2} - q^{3} - q^{4} + q^{5} + q^{6} + 3q^{8} + q^{9} - q^{10} - 4q^{11} + O(q^{12}).$$
Since $W_{3}(F)=F$ and $W_{5}(F)=-F$, we consider the definite quaternion algebra $B$ ramified at $5$. Let $O$ be an order in $B$ of level $N=15$. $O$ has two left ideal classes $[I_{1}]$ and $[I_{2}]$ which give rise to two orders $O_{1}=O_{r}(I_{1})$ and $O_{2}=O_{r}(I_{2})$.  The subgroups $R_{1}$ and $R_{2}$ of trace zero elements in $\mathbb{Z}+2O_{1}$ and $\mathbb{Z}+2O_{2}$ have bases $\{ i + \frac{1}{3}j + \frac{4}{3}k, \frac{2}{3}j + \frac{8}{3}k, 3k \}$ and $\{ \frac{4}{3}j - \frac{2}{3}k, i + \frac{2}{3}j - \frac{1}{3}k,  j - 2k \}$ respectively. The two theta series generated by the norm form of the quaternion algebra evaluated on $R_{1}$ and $R_{2}$ are:
 \begin{align*}g(I_{1})=\sum_{x\in R_{1}}q^{N(x)}&=\frac{1}{2}+ q^3 + q^{12} + 3q^{20} + 6q^{23} + q^{27} + 6q^{32} + 6q^{47} + q^{48} + 3q^{60}\\& +6q^{63}+6q^{68}+6q^{72}+q^{75}+3q^{80}+6q^{83}+6q^{87}+6q^{92}+6q^{95}+O(q^{100})\end{align*} and \begin{align*}g(I_{2})=\sum_{x\in R_{2}}q^{N(x)}&=\frac{1}{2} + 2q^8 + q^{12} + q^{15} + q^{20} + 4q^{23} + 2q^{27} + 4q^{32} + 2q^{35} + 8q^{47} + 3q^{48} + q^{60}\\& +6q^{63}+6q^{68}+4q^{72}+5q^{80}+4q^{83}+4q^{87}+10q^{92}+6q^{95}+ O(q^{100}).\end{align*}
The vector $I_{F}=I_{1}-I_{2}$ is in the $F$-isotypical component of $X(\mathbb{R})$. Recall that $I_{F}$ is unique up to a scalar multiple and satisfies $t_{p}(I_{F})=b_{p}I_{F}$ for all $p\nmid N$, where the action of the Hecke operators $\{t_{p}\}_{p\nmid N}$ is determined by the Brandt matrices $\{B_{p}\}_{p\nmid N}$. Hence, one can find the eigenvector $I_{F}$ using the Brandt module package in Sage by computing a few Brandt matrices and knowing a few Fourier coefficients of $F$. 

Hence, the modular form \begin{align*}g=g(I_{1})-g(I_{2})&=q^3 - 2q^8 - q^{15} + 2q^{20} + 2q^{23} - q^{27} + 2q^{32} - 2q^{35} - 2q^{47} - 2q^{48} + 2q^{60}\\& +2q^{72}+q^{75}-2q^{80}+2q^{83}+2q^{87}-4q^{92}+ O(q^{100})\end{align*}
 lies in the space $S_{\frac{3}{2}}(\Gamma_{0}(60),F)$. 
 
We shall now apply the theorem of Waldspurger to compute a basis for this space. In order to determine the local objects necessary to calculate the local factor $\prod{c_{p}(n)}$, we use the explicit formulae given in \cite{waldspurger}. As a result of these calculations, we get $N(F)=2^{2}\times3\times5$ and $S_{\frac{3}{2}}(\Gamma_{0}(60),F)=U(60,F,A_{F})$. We also get: $$U_{3}(1,F)=\{c_{3}^{s}[\lambda_{3}]\}\text{ }\text{ }\text{ }( \lambda_{3}=3^{-\frac{1}{2}}b_{3}),$$ $$U_{5}(1,F)=\{c_{5}^{s}[\lambda_{5}]\}\text{ }\text{ }\text{ }( \lambda_{5}=5^{-\frac{1}{2}}b_{5}),$$ $$U_{2}(2,F)=\{c_{2}'[\alpha_{2}], c_{2}'[\alpha'_{2}]\}\text{ }\text{ }\text{ }(\alpha_{2}+\alpha'_{2}=2^{-\frac{1}{2}}b_{2},\text{ }\alpha_{2}\alpha'_{2}=1),$$ $$U_{p}(0,F)=\{c_{p}^{0}[\lambda_{p}]\},\text{ for all }p\neq2,3,5\text{ }\text{ }\text{ }(\lambda_{p}=p^{-\frac{1}{2}}b_{p}).$$ Since the set $U(60,F)$ constitutes of two elements $\mathbf{c_{1}}$ and $\mathbf{c_{2}}$, the space $U(60,F,A_{F})$ is generated by the two functions $f(\mathbf{c_{1}},A_{F})$ and $f(\mathbf{c_{2}},A_{F})$ whose Fourier coefficients are calculated as follows: $$a_{n}(\mathbf{c_{1}},A_{F})=A_{F}(n^{\mathrm{sf}})n^{\frac{1}{4}}\underbrace{c_{3}^{s}[\lambda_{3}](n)c_{5}^{s}[\lambda_{5}](n)c'_{2}[\alpha_{2}](n)\prod_{p\neq 2,3,5}c_{p}^{0}[\lambda_{p}](n)}_{K_{1}(n)}$$
$$a_{n}(\mathbf{c_{2}},A_{F})=A_{F}(n^{\mathrm{sf}})n^{\frac{1}{4}}\underbrace{c_{3}^{s}[\lambda_{3}](n)c_{5}^{s}[\lambda_{5}](n)c'_{2}[\alpha'_{2}](n)\prod_{p\neq 2,3,5}c_{p}^{0}[\lambda_{p}](n)}_{K_{2}(n)}.$$
We carried out these computations and obtained the first few local factors $K_{1}$ and $K_{2}$.\\
\begin{table}[ht]
\caption{The Local Factors $K_{1}$ and $K_{2}$}
\centering
\begin{tabular}{|c|cc|c|cc|}
\hline
$n$ & $K_{1}(n)$ & $K_{2}(n)$ & $n$ & $K_{1}(n)$ & $K_{2}(n)$\\
\hline
2 &  2 &  2  & 13 & 0 & 0\\
3 & $\frac{1+i\sqrt{7}}{2}$ & $ \frac{1-i\sqrt{7}}{2}$ & 14 & 0 & 0\\
4 & 0 & 0 & 15 & $\frac{-3+i\sqrt{7}}{2\sqrt{2}}$ & $ \frac{-3-i\sqrt{7}}{2\sqrt{2}}$\\
5 & $\sqrt{2}$ & $\sqrt{2}$ & 16 & 0 & 0\\
6 & 0 & 0 & 17 & 2 & 2\\
7 & 0 & 0 &18 & $-\frac{2}{\sqrt{3}}$ & $-\frac{2}{\sqrt{3}}$  \\
8 & $\frac{-1+i\sqrt{7}}{\sqrt{2}}$ & $\frac{-1-i\sqrt{7}}{\sqrt{2}}$ & 19 & 0 & 0\\
9 & 0 & 0 & 20 & $\frac{-1+i\sqrt{7}}{2}$ & $ \frac{-1-i\sqrt{7}}{2}$\\
10 & 0 & 0 & 21 & 0 & 0 \\
11 & 0 & 0 &22 & 0 & 0 \\
12 & $-\sqrt{2}$ & $-\sqrt{2}$ &23 & $\frac{-3+i\sqrt{7}}{\sqrt{2}}$ & $ \frac{-3-i\sqrt{7}}{\sqrt{2}}$\\
\hline
\end{tabular}
\end{table}\\
Therefore,\begin{align*}f(\mathbf{c_{1}},A_{F}) &=A_{F}(2)2^{\frac{1}{4}}(2)q^{2}+A_{F}(3)3^{\frac{1}{4}}\left(\frac{1+i\sqrt{7}}{2}\right)q^{3}+A_{F}(5)5^{\frac{1}{4}}(\sqrt{2})q^{5}\\&+A_{F}(2)2^{\frac{1}{4}}(-1+i\sqrt{7})q^{8}+A_{F}(3)3^{\frac{1}{4}}(-2)q^{12} +A_{F}(15)15^{\frac{1}{4}}\left(\frac{-3+i\sqrt{7}}{2\sqrt{2}}\right)q^{15}\\&+A_{F}(17)17^{\frac{1}{4}}(2)q^{17}+A_{F}(2)2^{\frac{1}{4}}(-2)q^{18}+A_{F}(5)5^{\frac{1}{4}}\left(\frac{-1+i\sqrt{7}}{\sqrt{2}}\right)q^{20}\\&+A_{F}(23)23^{\frac{1}{4}}\left(\frac{-3+i\sqrt{7}}{\sqrt{2}}\right)q^{23}+O(q^{24}),\end{align*}and 
\begin{align*}f(\mathbf{c_{2}},A_{F})&=A_{F}(2)2^{\frac{1}{4}}(2)q^{2}+A_{F}(3)3^{\frac{1}{4}}\left(\frac{1-i\sqrt{7}}{2}\right)q^{3}+A_{F}(5)5^{\frac{1}{4}}(\sqrt{2})q^{5}\\&+A_{F}(2)2^{\frac{1}{4}}(-1-i\sqrt{7})q^{8}+A_{F}(3)3^{\frac{1}{4}}(-2)q^{12}+A_{F}(15)15^{\frac{1}{4}}\left(\frac{-3-i\sqrt{7}}{2\sqrt{2}}\right)q^{15}\\&+A_{F}(17)17^{\frac{1}{4}}(2)q^{17}+A_{F}(2)2^{\frac{1}{4}}(-2)q^{18}+A_{F}(5)5^{\frac{1}{4}}\left(\frac{-1-i\sqrt{7}}{\sqrt{2}}\right)q^{20}\\&+A_{F}(23)23^{\frac{1}{4}}\left(\frac{-3-i\sqrt{7}}{\sqrt{2}}\right)q^{23}+O(q^{24}).\end{align*} 
Using Theorem \ref{prop} to calculate the global factors $A_{F}(t)$ gives the following values:

\begin{table}[ht]
\caption{The Global Factors $A_{F}(t)$}
\centering
\begin{tabular}{|c|c|}
\hline
$t$ & $A_{F}(t)$\\
\hline
2 &  $-r8^{-\frac{1}{4}}$\\
3 & $r2^{-\frac{1}{2}}3^{-\frac{1}{4}}$ \\
5 & $r5^{-\frac{1}{4}}$\\
15 & $-r15^{-\frac{1}{4}}$\\
17 & 0   \\
23 & $r23^{-\frac{1}{4}}$\\
\hline
\end{tabular}
\end{table}

Notice that the global factor $A_{F}(17)$ is zero because the coefficient $c_{68}$ of $g$ is zero. We apply the following linear combinations:
\begin{align*}f&=\frac{\sqrt{2}}{i\sqrt{7}}(f(\mathbf{c_{2}},A_{F})-f(\mathbf{c_{1}},A_{F}))=2^{\frac{1}{2}}A_{F}(3)3^{\frac{1}{4}}q^{3}+2A_{F}(2)8^{\frac{1}{4}}q^{8}+A_{F}(15)15^{\frac{1}{4}}q^{15}\\&+2A_{F}(5)5^{\frac{1}{4}}q^{20}+2A_{F}(23)23^{\frac{1}{4}}q^{23}+O(q^{24})\end{align*}and 
\begin{align*}h&= \sqrt{2}(f(\mathbf{c_{1}},A_{F})+f(\mathbf{c_{2}},A_{F}))\\&=4A_{F}(2)2^{\frac{3}{4}}q^{2}+A_{F}(3)2^{\frac{1}{2}}3^{\frac{1}{4}}q^{3} +4A_{F}(5)5^{\frac{1}{4}}q^{5}-2A_{F}(2)2^{\frac{3}{4}}q^{8}-4A_{F}(3)2^{\frac{1}{2}}3^{\frac{1}{4}}q^{12}\\&-3A_{F}(15)15^{\frac{1}{4}}q^{15}+4A_{F}(17)2^{\frac{1}{2}}17^{\frac{1}{4}}q^{17}-4A_{F}(2)2^{\frac{3}{4}}q^{18}-2A_{F}(5)5^{\frac{1}{4}}q^{20}\\&-6A_{F}(23)23^{\frac{1}{4}}q^{23} +O(q^{24}).\end{align*} After substituting the values of $A_{F}(t)$ obtained above, we get $$f=g=q^3 - 2q^8 - q^{15} + 2q^{20} + 2q^{23}+O(q^{24})$$ and $$h=-4q^{2}+q^{3}+4q^{5}+2q^{8}-4q^{12}+3q^{15}+4q^{18}-2q^{20}-6q^{23}+O(q^{24}).$$ 
Therefore, the modular forms $g$ and $h$ form a basis for the space $S_{\frac{3}{2}}(\Gamma_{0}(60),F)$.

In what follows, we generate more examples of the modular forms $g$ and $h$ which form a basis for the space $S_{\frac{3}{2}}(\Gamma_{0}(4N),F)$, using the functions we created in Sage to implement calculations of $h$ $($as described following Theorem $3.7$$)$ and $g$ $($as described in Section $1$$)$. These functions are available from the author upon request. The open source mathematical software package may be found at $\mathbf{http://www.sagemath.org/}$.

First, we compute a basis $\{g1,h1\}$ for the space $S_{\frac{3}{2}}(\Gamma_{0}(44),F1)$ with $F1$ being the newform in $S_{2}^{\mathrm{new}}(\Gamma_{0}(11))$ corresponding to the elliptic curve $y^{2}+y=x^{3}-x^{2}-10x-20$.

\begin{sageexample}
sage: F1=Newforms(11)[0]
sage: g1=shimura_lift_in_kohnen_subspace(F1,11,40)
sage: g1
sage: h1=shimura_lift_not_in_kohnen_subspace(F1,11,40)
sage: h1
\end{sageexample}

In the second example, we compute a basis $\{g2,h2\}$ for the space $S_{\frac{3}{2}}(\Gamma_{0}(84),F2)$ with $F2$ being the newform in $S_{2}^{\mathrm{new}}(\Gamma_{0}(21))$ corresponding to the elliptic curve $y^{2}+xy=x^{3}-4x-1$.

\begin{sageexample}
sage: F2=Newforms(21)[0]
sage: g2=shimura_lift_in_kohnen_subspace(F2,21,40)
sage: g2
sage: h2=shimura_lift_not_in_kohnen_subspace(F2,21,40)
sage: h2
\end{sageexample}

In the third example, we compute a basis $\{g3,h3\}$ for the space $S_{\frac{3}{2}}(\Gamma_{0}(132),F3)$ with $F3$ being the newform in $S_{2}^{\mathrm{new}}(\Gamma_{0}(33))$ corresponding to the elliptic curve $y^{2}+xy=x^{3}+x^{2}-11x$.

\begin{sageexample}
sage: F3=Newforms(33)[0]
sage: g3=shimura_lift_in_kohnen_subspace(F3,33,40)
sage: g3
sage: h3=shimura_lift_not_in_kohnen_subspace(F3,33,40)
sage: h3
\end{sageexample}

Next, we compute a basis $\{g4,h4\}$ for the space $S_{\frac{3}{2}}(\Gamma_{0}(140),F4)$ with $F4$ being the newform in $S_{2}^{\mathrm{new}}(\Gamma_{0}(35))$ corresponding to the elliptic curve $y^{2}+y=x^{3}+x^{2}+9x+1$.

\begin{sageexample}
sage: F4=Newforms(35,names='a')[0]
sage: g4=shimura_lift_in_kohnen_subspace(F4,35,40)
sage: g4
sage: h4=shimura_lift_not_in_kohnen_subspace(F4,35,40)
sage: h4
\end{sageexample}

In the following example, we compute a basis $\{g5,h5\}$ for the space $S_{\frac{3}{2}}(\Gamma_{0}(148),F5)$ with $F5$ being the newform in $S_{2}^{\mathrm{new}}(\Gamma_{0}(37))$ corresponding to the elliptic curve $y^{2}+y=x^{3}+x^{2}-23x-50$.

\begin{sageexample}
sage: F5=Newforms(37)[1]
sage: g5=shimura_lift_in_kohnen_subspace(F5,37,40)
sage: g5
sage: h5=shimura_lift_not_in_kohnen_subspace(F5,37,40)
sage: h5
\end{sageexample}

Finally, we compute a basis $\{g6,h6\}$ for the space $S_{\frac{3}{2}}(\Gamma_{0}(924),F6)$ with $F6$ being the newform in $S_{2}^{\mathrm{new}}(\Gamma_{0}(231))$ corresponding to the elliptic curve $y^{2}+xy+y=x^{3}+x^{2}-34x+62$.

\begin{sageexample}
sage: F6=Newforms(231,names='a')[0]
sage: g6=shimura_lift_in_kohnen_subspace(F6,231,40)
sage: g6
sage: h6=shimura_lift_not_in_kohnen_subspace(F6,231,40)
sage: h6
\end{sageexample}

\bibliographystyle{siam}
\bibliography{myrefs}

\end{document}